\crefname{lem}{Lemma}{Lemmas}
\crefname{thm}{Theorem}{Theorems}
\crefname{cor}{Corollary}{Corollaries}
\crefname{prop}{Proposition}{Propositions}
\crefname{conj}{Conjecture}{Conjectures}
\crefname{open}{Open Problem}{Open Problems}
\newcommand{\defn}[1]{\textcolor{Maroon}{\emph{#1}}}
\def\NAT@spacechar{~}
\renewcommand{\baselinestretch}{1.1}
\DeclarePairedDelimiter{\ceil}{\lceil}{\rceil}
\renewcommand{\thefootnote}{\fnsymbol{footnote}} 
\renewcommand{\geq}{\geqslant}
\renewcommand{\leq}{\leqslant}
\renewcommand{\ge}{\geqslant}
\renewcommand{\le}{\leqslant}
\renewcommand{\emptyset}{\varnothing}
\theoremstyle{plain}
\newtheorem{thm}{Theorem}
\newtheorem{cor}[thm]{Corollary}
\newtheorem{lem}[thm]{Lemma}
\newtheorem{conj}[thm]{Conjecture}
\begin{document}

\author{Ian M. Wanless \qquad \qquad David~R.~Wood}

\footnotetext{\today. School of Mathematics, Monash University, Melbourne, Australia. Email: \texttt{\{ian.wanless,david.wood\}@monash.edu}. Research supported by the Australian Research Council.}

\title{\boldmath\bf A general framework for hypergraph colouring}
\date{}
\maketitle

\begin{abstract}
The Lov\'asz Local Lemma is a powerful probabilistic technique for proving the existence of combinatorial objects. It is especially useful for colouring graphs and hypergraphs with bounded maximum degree. This paper presents a general theorem for colouring hypergraphs that in many instances matches or slightly improves upon the bounds obtained using the Lov\'asz Local Lemma. Moreover, the theorem directly shows that there are exponentially many colourings. The elementary and self-contained proof is inspired by a recent result for nonrepetitive colourings by Rosenfeld [2020]. We apply our general theorem in the setting of proper hypergraph colouring, proper graph colouring, independent transversals, star colouring, nonrepetitive colouring, frugal colouring, Ramsey number lower bounds, and for $k$-SAT.
\end{abstract}

\renewcommand{\thefootnote}{\arabic{footnote}}

\section{Hypergraph Colouring}

In their seminal 1975 paper, \citet{EL75} introduced what is now called the Lov\'asz Local Lemma. This tool is one of the most powerful probabilistic techniques for proving the existence of combinatorial objects. Their motivation  was hypergraph colouring. A \defn{hypergraph} $G$ consists of a set $V(G)$ of \defn{vertices} and a set $E(G)$ of \defn{edges}, each of which is a subset of $V(G)$. A \defn{colouring} of a hypergraph $G$ is a function that assigns a `colour' to each vertex of $G$. A colouring of $G$ is \defn{proper} if no edge of $G$ is monochromatic. The \defn{chromatic number $\chi(G)$} is the minimum number of colours in a proper colouring of $G$. The \defn{degree} of a vertex $v$ in a hypergraph $G$ is the number of edges that contain $v$. A hypergraph is \defn{$r$-uniform} if each edge has size $r$. \citet{EL75} proved (using the Lov\'asz Local Lemma) that $\chi(G) \leq \ceil{(4r\Delta)^{1/(r-1)}}$ for every $r$-uniform hypergraph $G$ with maximum degree $\Delta$. The following result is a consequence of the strengthened Lov\'asz Local Lemma first stated by \citet{Spencer77}; see the book by \citet{MR02} for a comprehensive treatment.

\begin{thm}[\citep{EL75,Spencer77}] 
\label{EL} 
For every $r$-uniform hypergraph $G$ with maximum degree $\Delta$,
$$\chi(G) \leq \ceil{(e(r(\Delta-1)+1))^{1/(r-1)}}.$$
\end{thm}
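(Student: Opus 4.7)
The plan is to prove this by a direct application of the symmetric Lov\'asz Local Lemma to a uniformly random colouring. Set $k \defeq \ceil{(e(r(\Delta-1)+1))^{1/(r-1)}}$ (writing this directly rather than through the macro: let $k$ denote that ceiling) and colour each vertex of $G$ independently and uniformly at random with a colour from $\{1,\dots,k\}$. For each edge $f \in E(G)$, let $A_f$ be the bad event that all $r$ vertices of $f$ receive the same colour. Since the $r$ vertices are coloured independently, $\Pr[A_f] = k \cdot k^{-r} = k^{1-r}$.

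Next I would set up the dependency digraph in the usual way: $A_f$ is mutually independent of the family of events $\{A_{f'} : f' \cap f = \emptyset\}$, because disjoint edges involve disjoint sets of independent random choices. To bound the degree in the dependency graph, fix an edge $f$; any other edge $f'$ meeting $f$ contains some vertex $v \in f$, and each such $v$ lies in at most $\Delta-1$ edges besides $f$ itself. Hence $A_f$ is adjacent to at most $d \defeq r(\Delta-1)$ other bad events.

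Now I would apply the symmetric Lov\'asz Local Lemma, which guarantees a positive probability that none of the $A_f$ occur provided
\[
 e \cdot \Pr[A_f] \cdot (d+1) \;\leq\; 1,
\]
i.e.\ $e \cdot k^{1-r} \cdot (r(\Delta-1)+1) \leq 1$. Rearranging gives $k^{r-1} \geq e(r(\Delta-1)+1)$, which holds exactly by the choice of $k$. The existence of a colouring with no monochromatic edge then proves $\chi(G) \leq k$.

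I do not expect any real obstacle here; the result is essentially a bookkeeping exercise once one picks $k$ appropriately and identifies the dependency structure. The only subtle point to double-check is the independence claim: strictly speaking, one should note that the vertices of $G$ may be coloured by a product probability space indexed by $V(G)$, so the $\sigma$-algebra generated by $A_f$ is contained in that generated by the coordinates indexed by $f$, giving mutual independence from events on disjoint coordinate sets. A harder (and more interesting) question, which presumably motivates the paper, is whether one can reprove and strengthen this bound by the Rosenfeld-style counting method to also obtain exponentially many colourings, but that lies outside the scope of reproving the stated inequality.
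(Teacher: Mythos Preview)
Your argument is correct and is precisely the standard symmetric Lov\'asz Local Lemma computation that the paper attributes to \citep{EL75,Spencer77}; the paper does not give its own proof of this theorem but merely cites it as the known benchmark, so your write-up matches the intended derivation.
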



This paper presents a general theorem for colouring hypergraphs, which in the special case of proper hypergraph colouring, (slightly) improves the upper bound in \cref{EL}. Moreover, the proof directly shows that there are exponentially many such colourings. The proof uses a simple counting argument inspired by a recent result for nonrepetitive colourings by \citet{Rosenfeld20}, which in turn is inspired by the power series method for pattern avoidance~\citep{Rampersad11,Ochem16,BW13}. 

It is well known that the proof of \cref{EL} works in the setting of list colourings, which we now introduce. Let $G$ be a hypergraph. A \defn{list-assignment} for $G$ is a function $L$ that assigns each vertex $v$ of $G$ a set $L(v)$, whose elements are called \defn{colours}. If $|L(v)|= c$ for each vertex $v$ of $G$, then $L$ is a \defn{$c$-list-assignment}. An \defn{$L$-colouring} of $G$ is a function $\phi$ such that $\phi(v)\in L(v)$ for each vertex $v$ of $G$. The \defn{choosability} $\chi_{\textup{ch}}(G)$ is the minimum integer $c$ such that $G$ has a proper $L$-colouring for every $c$-list-assignment $L$ of $G$. For a list assignment $L$ of a hypergraph $G$, let $P(G,L)$ be the number of proper $L$-colourings of $G$. 

The following theorem is our first contribution.

\begin{thm}
\label{Main}
For all integers $r\geq 3$ and $\Delta \geq 1$, and for every $r$-uniform hypergraph $G$ with maximum degree $\Delta$, 
$$\chi_{\text{ch}}(G) \leq c := \ceil*{ \Big( \frac{r-1}{r-2} \Big) \big((r-2)\Delta\big)^{1/(r-1)} }.$$
Moreover, for every $c$-list assignment $L$ of $G$, 
$$P(G,L) \geq \big( (r-2)\Delta \big)^{|V(G)|/(r-1)}.$$ 
\end{thm}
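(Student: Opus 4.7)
The plan is to prove, by induction on $|S|$, the following quantitative statement: for every $S \subsetneq V(G)$ and every $v \in V(G) \setminus S$,
$$P(G[S \cup \{v\}], L) \geq \alpha \cdot P(G[S], L),$$
where $\alpha := ((r-2)\Delta)^{1/(r-1)}$ and $G[S]$ is the sub-hypergraph induced by $S$. Applied iteratively from $S = \emptyset$, this gives $P(G, L) \geq \alpha^{|V(G)|} = ((r-2)\Delta)^{|V(G)|/(r-1)}$, which is the claimed lower bound on the number of colourings; positivity then yields $\chi_{\text{ch}}(G) \leq c$.

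For the inductive step, I would write $P(G[S \cup \{v\}], L) = c \cdot P(G[S], L) - B$, where $B$ is the number of pairs $(\phi, x)$ with $\phi$ a proper $L$-colouring of $G[S]$ and $x \in L(v)$ such that some edge $e \ni v$ (with $e \setminus \{v\} \subseteq S$) becomes monochromatic of colour $x$ when $v$ is coloured $x$. Summing over the at most $\Delta$ such edges gives
$$B \leq \sum_{e \ni v} N_e,$$
where $N_e$ counts proper $L$-colourings of $G[S]$ that are constant on $e \setminus \{v\}$ with common value in $L(v)$.

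The crucial step is bounding each $N_e$: writing $e \setminus \{v\} = \{u_1, \ldots, u_{r-1}\}$, the restriction map $\phi \mapsto \phi|_{S \setminus \{u_2, \ldots, u_{r-1}\}}$ is an injection into the proper $L$-colourings of $G[S \setminus \{u_2, \ldots, u_{r-1}\}]$, because the deleted values all coincide with $\phi(u_1)$ and can therefore be reconstructed. Hence $N_e \leq P(G[S \setminus \{u_2, \ldots, u_{r-1}\}], L)$. Applying the inductive hypothesis $r-2$ times, each time to a subset strictly smaller than $S$ (removing one $u_k$ at a time), converts this into the upper bound $N_e \leq \alpha^{-(r-2)} P(G[S], L)$. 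Using $\alpha^{r-1} = (r-2)\Delta$ simplifies $\Delta/\alpha^{r-2}$ to $\alpha/(r-2)$, so
$$P(G[S \cup \{v\}], L) \geq \big(c - \tfrac{\alpha}{r-2}\big) P(G[S], L),$$
and the definition of $c$ (with the ceiling) ensures $c \geq \alpha(r-1)/(r-2) = \alpha + \alpha/(r-2)$, closing the induction.

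The main subtlety will be justifying the reverse use of the inductive hypothesis to extract an \emph{upper} bound on $P$ after deleting vertices; this is legitimate precisely because each successive deletion gives a set of strictly smaller size, so there is no circularity. The base cases ($|S| < r - 1$) are trivial: no edge containing $v$ can lie inside $S \cup \{v\}$, so $P(G[S \cup \{v\}], L) = c \cdot P(G[S], L) \geq \alpha P(G[S], L)$.
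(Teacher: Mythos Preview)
Your proposal is correct and follows essentially the same approach as the paper: the inductive statement $P(G[S\cup\{v\}],L)\ge\alpha\,P(G[S],L)$ is precisely the specialization of the paper's \cref{GeneralInduction} to proper colouring (your $\alpha$ is the paper's $\beta$), and your injection via reconstruction from $\phi(u_1)$ is exactly the ``determining set'' argument underlying the weight computation $r-2$. The paper differs only in that it first abstracts the argument into the general framework of \cref{General} and then plugs in the parameters, whereas you carry out the induction directly for this instance.
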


We now compare the above-mentioned bounds. Since $(\frac{r-1}{r-2})^{r-2} < e$, it follows that $\big(\frac{r-1}{r-2} \big) \big((r-2)\Delta\big)^{1/(r-1)} < ( e(r-1)\Delta )^{1/(r-1)}$, and assuming $\Delta\geq r-1$, the bound in \cref{Main} is slightly better than the bound in \cref{EL}. The difference is most evident for small $r$. For example, if $r=3$ then the bound in \cref{Main} is $\ceil[\big]{2\sqrt{\Delta}\,}$ compared with $\ceil[\big]{\sqrt{e(3\Delta-2)}\,}$ from \cref{EL}. 

Several researchers have communicated to us that, with a little effort, one can conclude the existence of exponentially many colourings using the Lov\'asz Local Lemma (or other methods), although as far as we are aware no general result of this nature is published. One attraction of our proof is that it gives exponentially many colourings for free. Indeed, this stronger conclusion is a key to enabling the simple proof. See \citep{Harris21,AS16a,Pluhar09,Alon85,AB88,Gebauer13,Cherkashin11,Beck78,MRT77} for more results on colouring hypergraphs with given maximum degree or number of edges, and see \citep{Thomassen07a,Thomassen07b,DMS19,KP18,DS17,Harutyunyan} for other theorems showing the existence of exponentially many colourings in various graph settings.

\cref{Main} is a special case of a more general result that we introduce in the following section. Then, in \cref{Examples},  we apply this general result to a variety of colouring problems, including hypergraph colouring,  graph colouring, independent transversals, star colouring, nonrepetitive colouring, frugal colouring, Ramsey number lower bounds, and $k$-SAT. \cref{Reflections} concludes by comparing our general result with other techniques including the Lov\'asz Local Lemma and entropy compression. 

\section{General Framework}
\label{GeneralFramework}

For a hypergraph $G$ (allowing parallel edges), let $\mathcal{C}_G$ be the set of all colourings $\phi:V(G)\to\mathbb{Z}$. (For concreteness, we assume all colours are integers.)\ For an edge $e$ of $G$, let $\mathcal{C}_e$ be the set of all colourings $\phi:e \to \mathbb{Z}$. An \defn{instance} is a pair $(G,\mathcal{B})$ where $G$ is a hypergraph and $\mathcal{B}=(\mathcal{B}_e\subseteq \mathcal{C}_e: e\in E(G))$. A colouring $\phi\in\mathcal{C}_G$ is $\mathcal{B}$-\defn{bad} if, for some edge $e\in E(G)$, we have that $\phi$ restricted to $e$ is in $\mathcal{B}_e$. Every other colouring in $\mathcal{C}_G$ is \defn{$\mathcal{B}$-good}. For an integer $c\geq 1$, we say $G$ is \defn{$(\mathcal{B},c)$-choosable} if there is a $\mathcal{B}$-good $L$-colouring of $G$ for every $c$-list assignment $L$ of $G$. For a list assignment $L$ of $G$, let $P(G,\mathcal{B},L)$ be the number of $\mathcal{B}$-good $L$-colourings of $G$. 

Fix an instance $(G,\mathcal{B})$ and consider an edge $e$ of $G$. A subset $S\subseteq e$ \defn{determines} $\mathcal{B}_e$ if any two colourings in $\mathcal{B}_e$ that agree on $S$ are identical. For every vertex $v$ in $e$, we assume that $\mathcal{B}_e$ is determined by some subset of $e\setminus\{v\}$. (Consider this assumption to be part of the definition of `instance'.)\ Then define the \defn{weight} of $(v,e)$ to be $|e|-1-|S|$, where $S$ is a minimum-sized subset of $e\setminus\{v\}$ that determines $\mathcal{B}_e$. 
For each vertex $v$ of $G$, let \defn{$E_k(v)$} be the number of pairs $(v,e)$ with weight $k$.

For example, to model proper colouring in an $r$-uniform hypergraph $G$, for each edge $e$ of $G$, let $\mathcal{B}_e$ be the monochromatic colourings in $\mathcal{C}_e$. Then a colouring is $\mathcal{B}$-good if and only if it is proper. For every edge $e$ and every vertex $v$ in $e$, if $w$ is any vertex in $e\setminus\{v\}$, then $\{w\}$ determines $\mathcal{B}_e$, implying that $(v,e)$ has weight $r-2$. 

\begin{thm}
 \label{General}
 Let $(G,\mathcal{B})$ be an instance. Assume there exist a real number $\beta\geq 1$ and an integer $c\geq 1$ such that for every vertex $v$ of $G$,
 \begin{equation}
 \label{Key}
c \geq \beta + \sum_{k\geq0} \beta^{-k} E_k(v)   .
 \end{equation}
 Then $G$ is $(\mathcal{B},c)$-choosable. Moreover, for every $c$-list assignment $L$ of $G$, 
 $$P(G,\mathcal{B},L) \;\geq\; \beta^{|V(G)|}.$$
\end{thm}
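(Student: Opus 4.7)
The plan is to prove, by induction on $|V(H)|$, the strengthened statement that for every sub-hypergraph $H$ of $G$ obtained by deleting some set of vertices (and all edges containing them) and every vertex $v \in V(H)$,
\[
P(H, \mathcal{B}, L) \;\geq\; \beta \cdot P(H - v, \mathcal{B}, L),
\]
with $L$ restricted to the vertex set in question. Iterating this inequality along an enumeration of $V(G)$ yields $P(G, \mathcal{B}, L) \geq \beta^{|V(G)|}$, and nonemptiness of the $\mathcal{B}$-good $L$-colourings then gives $(\mathcal{B}, c)$-choosability. Hypothesis \eqref{Key} is preserved when deleting vertices, since removing an edge can only decrease each $E_k(\cdot)$ at the remaining vertices, so the inductive hypothesis may be invoked on any sub-hypergraph that arises.

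For the inductive step, I fix a vertex $v \in V(H)$ and observe that each $\mathcal{B}$-good $L$-colouring of $H$ corresponds uniquely to a pair $(\psi, \alpha)$, where $\psi$ is a $\mathcal{B}$-good $L$-colouring of $H - v$ and $\alpha \in L(v)$. Let $X$ be the set of pairs for which $\psi \cup \{v \mapsto \alpha\}$ fails to be $\mathcal{B}$-good on $H$, and for each edge $e \ni v$ let $X_e \subseteq X$ consist of those pairs ruined specifically by $e$. Then
\[
P(H, \mathcal{B}, L) \;=\; c \cdot P(H - v, \mathcal{B}, L) \;-\; |X|,
\qquad
|X| \;\leq\; \sum_{e \ni v} |X_e|.
\]
It therefore suffices to show $|X_e| \leq \beta^{-k} P(H - v, \mathcal{B}, L)$ whenever $(v, e)$ has weight $k$, because summing over $e$ and invoking \eqref{Key} then yields $|X| \leq (c - \beta) P(H - v, \mathcal{B}, L)$, and the target inequality follows.

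For the per-edge bound, let $S \subseteq e \setminus \{v\}$ be a minimum determining set for $\mathcal{B}_e$ and set $R = (e \setminus \{v\}) \setminus S$, so $|R| = k$. For any $(\psi, \alpha) \in X_e$, the colouring $\phi := \psi \cup \{v \mapsto \alpha\}$ restricted to $e$ lies in $\mathcal{B}_e$; since $S$ determines $\mathcal{B}_e$, the restriction $\phi|_S = \psi|_S$ forces all of $\phi|_e$, and in particular forces both $\alpha = \phi(v)$ and the values $\psi|_R$. Consequently the map $(\psi, \alpha) \mapsto \psi|_{V(H - v) \setminus R}$ is injective on $X_e$, and its image consists of $\mathcal{B}$-good $L$-colourings of $(H - v) - R$ (the restriction of a $\mathcal{B}$-good colouring to a sub-hypergraph remains $\mathcal{B}$-good, because deleting vertices only deletes edges). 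Hence $|X_e| \leq P\bigl((H - v) - R, \mathcal{B}, L\bigr)$.

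To close the step, I iterate the inductive hypothesis $k$ times on $H - v$, peeling off the vertices of $R$ one at a time; this gives $P(H - v, \mathcal{B}, L) \geq \beta^k \, P\bigl((H - v) - R, \mathcal{B}, L\bigr)$ and hence $|X_e| \leq \beta^{-k} P(H - v, \mathcal{B}, L)$, as required. The point demanding most care is the injectivity argument on $X_e$: it relies crucially on $S$ lying inside $e \setminus \{v\}$, which is exactly the hypothesis, built into the definition of \emph{instance}, that $\mathcal{B}_e$ be determined by some subset of $e \setminus \{v\}$ for every $v \in e$. Without that condition the value of $\alpha$ could not be recovered from $\psi|_S$, and the whole reduction would collapse.
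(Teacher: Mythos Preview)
Your proof is correct and follows essentially the same route as the paper's: the paper isolates the inductive inequality $P(H,\mathcal{B},L)\geq\beta\,P(H-v,\mathcal{B},L)$ as a separate lemma, proves it via the identity $P(H,\mathcal{B},L)=c\,P(H-v,\mathcal{B},L)-|X|$, bounds the colourings charged to each edge $e\ni v$ by $P(H-T,\mathcal{B},L)$ with $T=e\setminus S$ (your $R\cup\{v\}$), and then applies the inductive hypothesis $k$ times. The only cosmetic difference is that the paper charges each bad colouring to a single edge and groups by weight, whereas you use a union bound over all $X_e$; both yield the same sum $\sum_k E_k(v)\beta^{-k}$.
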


Before proving \cref{General} we make a couple of minor observations. 
If $\beta>1$ then \cref{General} guarantees exponentially many $\mathcal{B}$-good colourings. 
If $\beta=1$ then \cref{General} guarantees at least one $\mathcal{B}$-good colouring. 
In most applications $\beta>1$, but on one occasion the case $\beta=1$ is of interest (see \cref{Proper}). 
When applying \cref{General} it is not necessary to determine the weight of a pair exactly; it suffices to determine a lower bound on the weight (because of the $\beta^{-k}$ term in \cref{Key}, where $\beta\geq 1$). 

\cref{General} is an immediate corollary of the following lemma. If $(G,\mathcal{B})$ is an instance 
with $\mathcal{B}=(\mathcal{B}_e:e\in E(G))$, and $H$ is a sub-hypergraph of $G$, then $(H,\mathcal{B})$ refers to the instance $\big(H,(\mathcal{B}_e:e\in E(H))\big)$. Similarly, if $L$ is a list-assignment for $G$, then we consider $L$ (restricted to $V(H)$) to be a list-assignment for $H$. 

\begin{lem}
 \label{GeneralInduction}
 Let $(G,\mathcal{B})$ be an instance. Assume there exist a real number $\beta\geq 1$ and an integer $c\geq 1$ such that $\cref{Key}$ holds for every vertex $v$ of $G$. Then for every $c$-list assignment $L$ of $G$, for every induced sub-hypergraph $H$ of $G$, and for every vertex $v$ of $H$, 
 $$P(H,\mathcal{B},L) \;\geq\; \beta\, P(H-v,\mathcal{B},L).$$
\end{lem}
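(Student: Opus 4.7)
The plan is to argue by strong induction on $|V(H)|$, with the base case $|V(H)|=0$ being vacuous (no $v$ to pick). For the inductive step, fix $H$ with $|V(H)|\ge 1$ and a vertex $v\in V(H)$, and estimate $P(H,\mathcal{B},L)$ by counting extensions: each $\mathcal{B}$-good $L$-colouring $\phi$ of $H-v$ can be extended to $v$ in $c$ ways, and the resulting colouring $\phi'$ of $H$ is $\mathcal{B}$-good exactly when no edge $e\ni v$ of $H$ becomes bad (edges avoiding $v$ are automatic, since the restriction of a $\mathcal{B}$-good colouring to an induced sub-hypergraph is again $\mathcal{B}$-good). Hence
$$P(H,\mathcal{B},L)\;\ge\;c\,P(H-v,\mathcal{B},L)-\sum_{e\ni v}N_e,$$
where $N_e$ counts the pairs $(\phi,x)$ for which $\phi'|_e \in \mathcal{B}_e$.

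The crux is to bound $N_e$ using the weight $k$ of $(v,e)$. Fix a minimum-size determining set $S_e\subseteq e\setminus\{v\}$ of size $|e|-1-k$, and set $T_e = e\setminus(\{v\}\cup S_e)$, which has size $k$. By the determining property, any colouring in $\mathcal{B}_e$ is pinned down by its values on $S_e$; in particular, for a bad pair, the colour $x$ assigned to $v$ and the values $\phi|_{T_e}$ are both forced by $\phi|_{S_e}$. Therefore the map $(\phi,x)\mapsto \phi|_{V(H)\setminus(\{v\}\cup T_e)}$ is injective on bad pairs at $e$, and its image consists of $\mathcal{B}$-good $L$-colourings of the induced sub-hypergraph $H-v-T_e$, giving $N_e\le P(H-v-T_e,\mathcal{B},L)$.

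Next, applying the inductive hypothesis $k$ times to remove the vertices of $T_e$ from $H-v$ one at a time yields $P(H-v,\mathcal{B},L)\ge\beta^{k}P(H-v-T_e,\mathcal{B},L)$, so $N_e\le \beta^{-k}P(H-v,\mathcal{B},L)$. Summing over edges $e\ni v$ and grouping by weight (while noting that the weights and counts $E_k(v)$ computed in $H$ are bounded above by those in $G$, which is what \cref{Key} constrains) bounds the total number of bad extensions by $P(H-v,\mathcal{B},L)\sum_{k\ge 0}\beta^{-k}E_k(v)$. Substituting and invoking \cref{Key} completes the induction:
$$P(H,\mathcal{B},L)\;\ge\;\Big(c-\sum_{k\ge 0}\beta^{-k}E_k(v)\Big)P(H-v,\mathcal{B},L)\;\ge\;\beta\,P(H-v,\mathcal{B},L).$$

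The main obstacle I anticipate is the injectivity step underlying $N_e\le P(H-v-T_e,\mathcal{B},L)$: one must carefully deploy the determining property of $S_e$ and verify that restrictions of $\mathcal{B}$-good colourings do yield $\mathcal{B}$-good colourings of the relevant induced sub-hypergraph, so that the inductive hypothesis applies. The iterated use of the induction to handle $k$ vertex deletions simultaneously is routine but must be phrased to avoid circularity, which is precisely why the lemma is stated for a generic induced sub-hypergraph of $G$ rather than just for $G$ itself.
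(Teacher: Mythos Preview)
Your proposal is correct and follows essentially the same approach as the paper: induction on $|V(H)|$, an extension count yielding $P(H,\mathcal{B},L)=c\,P(H-v,\mathcal{B},L)-|X|$ (the paper charges each bad extension to a single edge to get an equality, while you use the union bound $|X|\le\sum_{e\ni v}N_e$, which is equivalent for the argument), then the determining-set argument to bound each $N_e$ by $P(H-v-T_e,\mathcal{B},L)$, followed by $k$ applications of the inductive hypothesis to compare this with $P(H-v,\mathcal{B},L)$. The only cosmetic differences are the base case ($|V(H)|=0$ versus $|V(H)|=1$) and whether $v$ is included in the set $T$.
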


\begin{proof}
 We proceed by induction on $|V(H)|$.  
 The base case with $|V(H)|=1$ is trivial. 
 Let $H$ be an induced sub-hypergraph of $G$, and assume the claim holds for all induced sub-hypergraphs of $G$ with less than $|V(H)|$ vertices. Let $v$ be any vertex of $H$. Let $X$ be the set of $\mathcal{B}$-bad $L$-colourings of $H$ that are $\mathcal{B}$-good on $H-v$. Then 
 \begin{align}
 \label{F}
 P(H,\mathcal{B},L) \;=\; c \, P(H-v,\mathcal{B},L) \,-\,|X|.
 \end{align}
 We now find an upper bound for $|X|$. For each $L$-colouring $\phi$ in $X$ there is an edge $e\in E(H)$ containing $v$ such that $\phi\in \mathcal{B}_e$ (if there are several options for $e$, fix a choice arbitrarily). Charge $\phi$ to $(v,e)$. Let $X_k$ be the set of colourings in $X$ that are charged to a pair with weight $k$. 
 Consider $\phi$ in $X_k$ charged to $(v,e)$. Let $S$ be a minimum-sized subset of $e\setminus\{v\}$ that determines $\mathcal{B}_e$. Let $T:=e\setminus S$. Then $|T|=k+1$ and $v\in T$. Since $\phi$ is $\mathcal{B}$-good on $H-v$, we know that $\phi$ is also $\mathcal{B}$-good on $H-T$. Since $S$ determines $\mathcal{B}_e$, the number of $L$-colourings in $X_k$ charged to $(v,e)$ is at most  $P(H-T,\mathcal{B},L)$. By induction,
 \begin{align*}
 P(H-v,\mathcal{B},L) \;\geq\; \beta^k \, P(H-T,\mathcal{B},L). 
 \end{align*}
 Thus the number of $L$-colourings in $X_k$ charged to $(v,e)$ is at most $\beta^{-k}\,P(H-v,\mathcal{B},L)$. 
Hence $|X_k| \,\leq\, E_k(v)\,\beta^{-k}\,P(H-v,\mathcal{B},L)$, and
 \begin{align*}
 |X|  
 \;=\;  \sum_{k\geq 0} |X_k|
 \;\leq\; P(H-v,\mathcal{B},L) \; \sum_{k\geq 0} E_k(v) \, \beta^{-k} .
 \end{align*}
 By \cref{F}, 
 \begin{align*}
 P(H,\mathcal{B},L) 
 \;\geq \; & c \, P(H-v,\mathcal{B},L) \,-\, P(H-v,\mathcal{B},L)\; 
 \sum_{k\geq 0} \beta^{-k}  E_k(v).
 \end{align*}
 By \cref{Key},  $P(H,\mathcal{B},L) \,\geq \, \beta\, P(H-v,\mathcal{B},L)$, as desired. 
\end{proof}

\section{Examples}
\label{Examples}

In this section, we apply \cref{General} for various types of (hyper)graph colouring problems and for $k$-SAT. In most cases, \cref{General} matches or improves on the best known bound on the number of colours (as a function of maximum degree), and in addition shows that there are exponentially many colourings. 

\subsection{Proper Colouring}
\label{Proper}

First we prove \cref{Main}. Let $G$ be an $r$-uniform hypergraph with maximum degree $\Delta$ where $r\geq 3$. For each edge $e$ of $G$, let $\mathcal{B}_e$ be the monochromatic colourings in $\mathcal{C}_e$; then a colouring is $\mathcal{B}$-good if and only if it is proper. Each pair $(v,e)$ 
has weight $r-2$, and $E_{r-2}(v)\leq\Delta$. Observe that \cref{Key} holds with $\beta := \big( (r-2)\Delta \big)^{1/(r-1)}$ and
$c:= \ceil[\big]{ \big( \frac{r-1}{r-2} \big) \big((r-2)\Delta\big)^{1/(r-1)} } $. \cref{Main} then follows from \cref{General}. 

Now consider proper colouring in a graph with maximum degree $\Delta$ (the case $r=2$ in the above). Then every pair $(v,e)$ 
has weight 0, and $E_0(v)\leq\Delta$. Thus $c:=\ceil{ \Delta + \beta}$ satisfies \cref{Key}. \cref{General}  with $\beta=1$ says that every graph $G$ with maximum degree $\Delta$ is $(\Delta+1)$-choosable. \cref{General}  with $\beta\geq 2$ says that for every $(\Delta+\beta)$-list assignment $L$ of $G$ there are at least $\beta^{|V(G)|}$ $L$-colourings. These well-known facts are easily proved by a greedy algorithm. It is interesting that the above general framework includes such statements (the Lov\'asz Local Lemma does not). Note that the Local Action Lemma of \citet{Bernshteyn14} is another general-purpose tool that implies $(\Delta+1)$-colourability; also see \citep{Bernshteyn17}. 

See \citep{Rassmann17,Rassmann19} for results about the number of 2-colourings in random 
hypergraphs and about the number of $k$-colourings in random graphs.

\subsection{Star Colouring}

A colouring $\phi$ of a graph $G$ is a \defn{star colouring} if it is proper and every bichromatic subgraph is a star forest; that is, there is no 2-coloured $P_4$ (path on four vertices). The \defn{star chromatic number} $\chi_{\text{st}}(G)$ is the minimum number of colours in a star colouring of $G$. \citet{FRR04} proved (using the Lov\'asz Local Lemma) that $\chi_{\text{st}}(G) \leq O(\Delta^{3/2})$ for every graph $G$ with maximum degree $\Delta$, and that this bound is tight up to a $O(\log\Delta)$ factor. The best known bound  is $\chi_{\text{st}}(G) \leq \sqrt{8}\Delta^{3/2}+\Delta$ proved by \citet{EsperetParreau} using entropy compression. Both these methods work for star choosability. We prove the same bound holds with exponentially many colourings. 

\begin{thm}
\label{StarColouring}
Every graph $G$ with maximum degree $\Delta$ is star $\ceil{\Delta + \sqrt{8\Delta}(\Delta-1)}$-choosable. Moreover, for every $\ceil{\Delta + \sqrt{8\Delta}(\Delta-1)}$-list assignment $L$, there are at least $\big(\sqrt{2\Delta}(\Delta-1)\big)^{|V(G)|}$ star $L$-colourings of $G$.
\end{thm}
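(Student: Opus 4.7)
The plan is to apply \cref{General} with an instance $(H,\mathcal{B})$ on vertex set $V(G)$ that has two types of hyperedges (parallel hyperedges are permitted). For each graph edge $uv \in E(G)$, include a hyperedge $\{u,v\}$ with $\mathcal{B}_e$ the set of monochromatic colourings of $e$. For each $P_4$ subgraph $v_1 v_2 v_3 v_4$ of $G$, include a hyperedge on $\{v_1,v_2,v_3,v_4\}$ with $\mathcal{B}_e$ the set of colourings satisfying $\phi(v_1)=\phi(v_3)\neq\phi(v_2)=\phi(v_4)$. Then a colouring is $\mathcal{B}$-good precisely when it is proper and contains no $2$-coloured $P_4$, that is, when it is a star colouring of $G$.

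Next I would determine the weights. A graph-edge hyperedge containing $v$ has weight $0$, since the other endpoint alone determines $\mathcal{B}_e$. For a $P_4$-hyperedge $e$ with $v\in e$, no single vertex of $e\setminus\{v\}$ determines $\mathcal{B}_e$ (the two alternating colours are independent degrees of freedom), while some pair does: for example, when $v=v_1$ the set $\{v_2,v_3\}$ determines $\mathcal{B}_e$ via $\phi(v_1)=\phi(v_3)$ and $\phi(v_4)=\phi(v_2)$, and the other three positions for $v$ are symmetric. Hence every $P_4$-hyperedge has weight exactly $1$ at each of its vertices, while graph-edge hyperedges have weight $0$, and $E_k(v)=0$ for $k\ge 2$. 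Clearly $E_0(v)\leq\Delta$.

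To bound $E_1(v)$ I would count unordered $P_4$'s through $v$: for each of the four positions $v$ may occupy there are at most $\Delta(\Delta-1)^2$ ordered ways to choose the remaining three vertices, giving at most $4\Delta(\Delta-1)^2$ ordered $P_4$'s through $v$, and hence $E_1(v) \leq 2\Delta(\Delta-1)^2$ after accounting for the two orderings of each unordered path.

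Finally I would take $\beta := \sqrt{2\Delta}\,(\Delta-1)$ and verify \cref{Key}:
\begin{align*}
\beta + E_0(v) + \beta^{-1} E_1(v)
\;\leq\; \sqrt{2\Delta}\,(\Delta-1) + \Delta + \frac{2\Delta(\Delta-1)^2}{\sqrt{2\Delta}\,(\Delta-1)}
\;=\; \Delta + \sqrt{8\Delta}\,(\Delta-1) \;\leq\; c.
\end{align*}
Then \cref{General} yields star $c$-choosability together with at least $\beta^{|V(G)|}=\bigl(\sqrt{2\Delta}\,(\Delta-1)\bigr)^{|V(G)|}$ star $L$-colourings for every $c$-list-assignment $L$. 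The step requiring the most care is the weight analysis for $P_4$-hyperedges: checking from each of the four positions of $v$ that a minimum determining subset of $e\setminus\{v\}$ has size exactly $2$, so that the weight is $1$ (not $0$) and the $\beta^{-1}$ factor in \cref{Key} kicks in, which is exactly what buys the $\sqrt{8\Delta}(\Delta-1)$ term in the bound.
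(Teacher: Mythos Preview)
Your proposal is correct and follows essentially the same route as the paper: the same two-type hyperedge instance, the same weight analysis ($0$ for graph edges, $1$ for $P_4$'s), the same bound $E_1(v)\le 2\Delta(\Delta-1)^2$, and the same choice $\beta=\sqrt{2\Delta}(\Delta-1)$ plugged into \cref{General}. The only cosmetic difference is that you include the inequality $\phi(v_1)=\phi(v_3)\neq\phi(v_2)=\phi(v_4)$ in $\mathcal{B}_e$ while the paper omits it; this is immaterial since the all-equal case is already bad via the edge constraints, and the determining-set size (hence the weight) is unchanged.
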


\begin{proof}
Define the following hypergraph $G'$ with $V(G')=V(G)$. Introduce one edge $e=\{v,w\}$ to $G'$ for each edge $vw$ of $G$, where $\mathcal{B}_e$ is the set of $L$-colourings $\phi\in\mathcal{C}_e$ such that $\phi(v)=\phi(w)$, and introduce one edge $e=\{u,v,w,x\}$ to $G'$ for each $P_4$ subgraph $(u,v,w,x)$ of $G$, where $\mathcal{B}_e$ is the set of $L$-colourings $\phi\in\mathcal{C}_e$ such that $\phi(u)=\phi(w)$ and $\phi(v)=\phi(x)$. For any list assignment $L$ of $G$, note that $G$ is star $L$-colourable if and only if $P(G',\mathcal{B},L)\geq 1$. Also, the weight of each 2-element edge is 0, and the weight of each 4-element edge is 1. Thus $E_0(v)\leq\Delta$ and $E_1(v) \leq 2\Delta(\Delta-1)^2$. Since \cref{Key} is satisfied with $\beta:=\sqrt{2\Delta}(\Delta-1)$ and  $c:=\ceil{\Delta + \sqrt{8\Delta}(\Delta-1)}$, the result follows from \cref{General}.
\end{proof}

\subsection{Nonrepetitive Graph Colouring}\label{ss:nonrep}

Let $\phi$ be a colouring of a graph $G$. A path $(v_1,\dots,v_{2t})$ in $G$ is \defn{repetitively coloured} by $\phi$ if $\phi(v_i)=\phi(v_{t+i})$ for each $i\in\{1,\dots,t\}$. A colouring $\phi$ of $G$ is \defn{nonrepetitive} if no path in $G$ is repetitively coloured by $\phi$. The \defn{nonrepetitive chromatic number} $\pi(G)$ is the minimum number of colours in a nonrepetitive colouring of $G$. The \defn{nonrepetitive choice number} $\pi_{\text{ch}}(G)$ is the minimum integer $c$ such that $G$ has a nonrepetitive $L$-colouring for every $c$-list assignment $L$ of $G$. \citet{AGHR02} proved that $\pi(G)\leq O(\Delta^2)$ for every graph with maximum degree $\Delta$, and that this bound is tight up to a $O(\log\Delta)$ factor. The proof shows the same bound for $\pi_{\text{ch}}$. Several authors subsequently improved the constant in the $O(\Delta^2)$ term: 
to $36\Delta^2$ by \citet{Grytczuk07}, 
to $16\Delta^2$ by \citet{Gryczuk-IJMMS07}, 
to $(12.2+o(1))\Delta^2$ by \citet{HJ-DM11}, 
and to $10.4\Delta^2$ by \citet{KSX12}. 
All these proofs used the Lov\'asz Local Lemma. 
\citet{DJKW16} improved the constant to 1, by showing that for every graph $G$ with maximum degree $\Delta$,
\begin{equation}
\label{DeltaSquared}
\pi(G) \leq \Delta^2 + O(\Delta^{5/3}).
\end{equation}
The proof of \citet{DJKW16} uses entropy compression; see \citep{GMP14,EsperetParreau} for refinements and simplifications to the method. Equation~\cref{DeltaSquared} was subsequently proved using the Local Cut Lemma of \citet{Bernshteyn17} and using cluster-expansion \citep{BFPS11,Aprile14}. Most recently, \citet{Rosenfeld20}  proved \cref{DeltaSquared} with exponentially many colourings. His paper inspired the present work. We now show that the result of Rosenfeld follows from our general framework. Note that all of the above results hold in the setting of choosability. 

\begin{thm}
For every graph $G$ with maximum degree $\Delta$, if 
$$\beta:= (1+2^{1/3} \Delta^{-1/3})(\Delta-1)^2 \quad \text{ and } \quad
c:= \ceil{ \beta +  2^{-2/3} \Delta^{5/3} (1+ 2^{1/3} \Delta^{-1/3} )^2  },$$
then $G$ is nonrepetitively $c$-choosable. Moreover, for every $c$-list assignment $L$ of $G$ there are at least 
$\beta^{|V(G)|}$ nonrepetitive $L$-colourings of $G$.
\end{thm}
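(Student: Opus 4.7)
The plan is to apply \cref{General} to a hypergraph $G'$ on vertex set $V(G)$ that encodes the nonrepetitive constraint path-by-path. For every $t\geq 1$ and every path $P=(v_1,\ldots,v_{2t})$ of $2t$ vertices in $G$ (considered up to reversal, since a path and its reverse impose the same constraint), I add one edge $e_P=\{v_1,\ldots,v_{2t}\}$ to $G'$ and set $\mathcal{B}_{e_P}$ to be the colourings $\phi$ of $e_P$ with $\phi(v_i)=\phi(v_{t+i})$ for every $i\in\{1,\ldots,t\}$. Parallel edges are allowed when distinct paths have the same vertex set. By construction, an $L$-colouring of $G$ is nonrepetitive iff it is $\mathcal{B}$-good on $G'$, so it suffices to exhibit $\beta$ and $c$ satisfying \cref{Key}.

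Next I would compute the weights and the counts $E_k(v)$. For any $v\in e_P$, one can choose a $t$-element subset $S\subseteq e_P\setminus\{v\}$ that hits exactly one vertex in each of the $t$ pairs $\{v_i,v_{t+i}\}$; such $S$ determines $\mathcal{B}_{e_P}$ via the equations $\phi(v_i)=\phi(v_{t+i})$, and no smaller set works because a pair missed entirely by $S$ permits a colour-flip. Hence $(v,e_P)$ has weight exactly $2t-1-t=t-1$. A standard walk-counting argument shows that the number of (unordered) paths of $2t$ vertices through a fixed vertex in a graph of maximum degree $\Delta$ is at most $t\,\Delta(\Delta-1)^{2t-2}$, so with $k=t-1$ one obtains
\[
E_k(v) \;\leq\; (k+1)\,\Delta\,(\Delta-1)^{2k}.
\]

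Finally I would verify \cref{Key}. Using the identity $\sum_{k\geq 0}(k+1)x^k=(1-x)^{-2}$ with $x:=(\Delta-1)^2/\beta<1$, the tail becomes
\[
\sum_{k\geq 0}\beta^{-k}E_k(v) \;\leq\; \frac{\Delta}{\bigl(1-(\Delta-1)^2/\beta\bigr)^2}.
\]
Substituting $\beta=(1+2^{1/3}\Delta^{-1/3})(\Delta-1)^2$ gives $1-(\Delta-1)^2/\beta = 2^{1/3}\Delta^{-1/3}/(1+2^{1/3}\Delta^{-1/3})$, so the right-hand side collapses to exactly $2^{-2/3}\Delta^{5/3}(1+2^{1/3}\Delta^{-1/3})^2$. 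The prescribed $c$ therefore satisfies \cref{Key}, and \cref{General} immediately delivers both the choosability claim and the exponential lower bound $\beta^{|V(G)|}$ on the number of nonrepetitive $L$-colourings. The main obstacle is the choice of $\beta$: writing $\beta=(1+\varepsilon)(\Delta-1)^2$, the two contributions to $c$ are roughly $\varepsilon\Delta^2$ and $\Delta/\varepsilon^2$, and balancing them forces $\varepsilon\asymp\Delta^{-1/3}$, which is precisely the exponent producing the $\Delta^{5/3}$ correction term in the statement. Everything else reduces to routine computation, and the path-counting bound is the only nontrivial combinatorial input.
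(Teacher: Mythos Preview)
Your proposal is correct and follows essentially the same route as the paper: the same auxiliary hypergraph on even paths, the same weight computation giving weight $t-1$ for paths of order $2t$, the same path-count bound $E_{t-1}(v)\le t\Delta(\Delta-1)^{2t-2}$, and the same summation via $\sum_{k\ge0}(k+1)x^k=(1-x)^{-2}$ with the substitution $\beta=(1+\epsilon)(\Delta-1)^2$, $\epsilon=2^{1/3}\Delta^{-1/3}$. The only cosmetic difference is that you justify minimality of the determining set explicitly, whereas the paper simply exhibits a determining set of size $t$ (which suffices since only a lower bound on the weight is needed).
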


\begin{proof}
Let $G'$ be the hypergraph with $V(G')=V(G)$, where there is an edge $V(P)$ for each path $P$ in $G$ of even order. Here we consider a path to be a subgraph of $G$, so that a path and its reverse contribute one edge to $G'$. For each edge $e$ of $G'$ corresponding to a path $P$ in $G$ of order $2t$, let $\mathcal{B}_e$ be the set of $L$-colourings $\phi\in\mathcal{C}_e$ such that $P$ is repetitively coloured by $\phi$. Thus $G$ is nonrepetitively $L$-colourable if and only if $P(G',\mathcal{B},L)\geq 1$.
 
Consider an edge $e$ of $G'$ corresponding to a path $P$ in $G$ on $2t$ vertices. For each vertex $v$ in $P$, 
any colouring $\phi\in \mathcal{B}_e$ is uniquely determined by $\phi$ restricted to the $t$ vertices in the half of $P$ not containing $v$. Hence $(v,e)$ has weight $t-1$. Every vertex of $G$ is in at most $t\Delta(\Delta-1)^{2t-2}$ paths on $2t$ vertices. So $E_{t-1}(v) \leq t\Delta(\Delta-1)^{2t-2}$. Equation~\cref{Key} requires
$$c \geq \beta + \sum_{t\geq1}  t\Delta(\Delta-1)^{2t-2} \, \beta^{1-t} .$$
Define $\beta:= (1+\epsilon)(\Delta-1)^2$ where $\epsilon>0$ is defined shortly. 
Equation~\cref{Key} requires
$$c \geq (1+\epsilon)(\Delta-1)^2 + 
\Delta\sum_{t\geq 1} t \, (1+\epsilon)^{-t+1} 
= (1+\epsilon)(\Delta-1)^2 + \epsilon^{-2}(1+\epsilon)^2 \Delta .$$ 
Define $\epsilon := 2^{1/3} \Delta^{-1/3}$ (to approximately minimise $(1+\epsilon)(\Delta-1)^2 + \epsilon^{-2}(1+\epsilon)^2 \Delta$). Then \cref{Key} holds with $c$ defined above, 
and the result follows from \cref{General}.
\end{proof}

\subsection{Frugal Colouring}

For an integer $k\geq 1$, a colouring $\phi$ of a graph $G$ is \defn{$k$-frugal} if $\phi$ is proper and $\big|\{w\in N_G(v): \phi(w)=i\} \big| \leq k$ for every vertex $v$ of $G$ and for every colour $i$, where $N_G(v)$ is the set of neighbours of $v$ in $G$. A 1-frugal colouring of $G$ corresponds to a proper colouring of $G^2$. \citet{HMR97} proved that for each integer $k\geq 1$ and sufficiently large $\Delta$, every graph with maximum degree $\Delta$ has a $k$-frugal colouring with $\max\{(k+1)\Delta,\frac{e^3}{k}\Delta^{1+1/k}\}$ colours. An example due to Alon shows that this upper bound is within a constant factor of optimal~\citep{HMR97}. In particular, for all $\Delta\geq k\geq 1$,  Alon constructed a graph with maximum degree at most $\Delta$ that has no $k$-frugal colouring with $\frac{1}{2k}\Delta^{1+1/k}$ colours. Here we improve the constant in the upper bound without assuming that $\Delta$ is sufficiently large, and with exponentially many colourings. 

\begin{thm}\label{Frugal} 
	For all integers $\Delta>k\ge2$, let 
	\begin{equation*}
	\beta:=\left(  (k-1)\Delta \binom{ \Delta-1}{k} \right)^{1/k}
	 \quad \text{and} \quad 
	c:= \Delta + \ceil*{ \frac{k\beta}{k-1} } .
	\end{equation*}
	Then every graph $G$ with maximum degree $\Delta$ has a $k$-frugal $c$-colouring. Moreover, for every
	$c$-list-assignment $L$ of $G$, the number of $k$-frugal
	$L$-colourings of $G$ is at least $\beta^{|V(G)|}$.
\end{thm}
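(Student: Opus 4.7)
The plan is to apply \cref{General} to an instance $(G',\mathcal{B})$ on $V(G')=V(G)$ that encodes both forbidden patterns of a $k$-frugal colouring. First, for each edge $uv\in E(G)$ I would add a 2-element hyperedge $\{u,v\}$ with $\mathcal{B}_{\{u,v\}}$ the monochromatic colourings, enforcing properness. Second, for each vertex $v\in V(G)$ and each $(k+1)$-subset $\{u_1,\dots,u_{k+1}\}\subseteq N_G(v)$ I would add a $(k+1)$-element hyperedge $\{u_1,\dots,u_{k+1}\}$ (allowing parallel edges) whose $\mathcal{B}_e$ is again the set of monochromatic colourings, forbidding $v$ from having $k+1$ same-coloured neighbours. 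Then the $\mathcal{B}$-good $L$-colourings of $G'$ are precisely the $k$-frugal $L$-colourings of $G$, so it suffices to verify \cref{Key} for the resulting instance.

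Next I would compute the weights. Since a monochromatic $\mathcal{B}_e$ is determined by the colour of any single vertex of $e$, each 2-element hyperedge contributes pairs of weight $0$ and each $(k+1)$-element hyperedge contributes pairs of weight $k-1$. For a fixed vertex $v$, the 2-element hyperedges through $v$ are indexed by edges of $G$ at $v$, giving $E_0(v)\le\Delta$. Each $(k+1)$-element hyperedge through $v$ is indexed by a ``centre'' $u\in N_G(v)$ together with a $k$-subset of $N_G(u)\setminus\{v\}$, giving
$$E_{k-1}(v) \;\le\; \sum_{u\in N_G(v)}\binom{\deg_G(u)-1}{k} \;\le\; \Delta\binom{\Delta-1}{k}.$$

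With these bounds, \cref{Key} reduces to the requirement
$$c \;\ge\; \beta + \Delta + \beta^{-(k-1)}\,\Delta\binom{\Delta-1}{k}.$$
The definition $\beta^k = (k-1)\Delta\binom{\Delta-1}{k}$ is calibrated precisely so that $\beta^{-(k-1)}\Delta\binom{\Delta-1}{k} = \beta/(k-1)$, so the right-hand side collapses to $\Delta + \beta + \beta/(k-1) = \Delta + k\beta/(k-1)$, which is at most $c$ by the definition of $c$. Then \cref{General} yields both $k$-frugal $c$-choosability and the lower bound $\beta^{|V(G)|}$ on the number of $k$-frugal $L$-colourings.

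The main conceptual step, I expect, is recognising that a frugality violation at $v$ constrains only the colours of $v$'s $k+1$ chosen neighbours (not $v$'s own colour), so the natural hyperedge is a $(k+1)$-subset of $N_G(v)$ rather than a set containing $v$; indeed, including $v$ would prevent $\mathcal{B}_e$ from being determined by any subset of $e\setminus\{v\}$, as required by the framework. Once this encoding is in hand, the two weight classes and the choice of $\beta$ (balancing the $\beta$ and $\beta^{-(k-1)}$ contributions in \cref{Key}) are essentially forced by the arithmetic.
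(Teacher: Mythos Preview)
Your proposal is correct and matches the paper's proof essentially step for step: the same auxiliary hypergraph (2-element edges for properness, $(k+1)$-subsets of neighbourhoods for frugality), the same weight computation ($0$ and $k-1$), the same degree bounds $E_0(v)\le\Delta$ and $E_{k-1}(v)\le\Delta\binom{\Delta-1}{k}$, and the same verification of \cref{Key} via the identity $\beta^{-(k-1)}\Delta\binom{\Delta-1}{k}=\beta/(k-1)$. Your closing remark about why the centre vertex must be excluded from the hyperedge is a nice clarification that the paper leaves implicit.
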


\begin{proof}
	Let $G'$ be the hypergraph with $V(G')=V(G)$, where every edge of $G$ is an edge of $G'$, and $\{w_1,\dots,w_{k+1}\}$ is an edge of $G'$ for every vertex $v$ of $G$ and set $\{w_1,\dots,w_{k+1}\}\subseteq N_G(v)$. In the latter case, we say the edge is \defn{centred} at $v$. For every edge $e=\{v,w\}$ of $G'$, let $\mathcal{B}_e$ be the set of $L$-colourings $\phi\in\mathcal{C}_e$ such that $\phi(v)=\phi(w)$. For every edge $e=\{w_1,\dots,w_{k+1}\}$ of $G'$, let $\mathcal{B}_e$ be the set of $L$-colourings $\phi\in\mathcal{C}_e$ such that $\phi(w_1)=\phi(w_2)=\dots=\phi(w_{k+1})$. Then a colouring of $G$ is $k$-frugal if and only if it is $\mathcal{B}$-good. 
	
	For each edge $e=\{v,w\}$ of $G'$, both $(v,e)$ and $(w,e)$ have weight 0. Consider an edge $e=\{w_1,\dots,w_{k+1}\}$ of $G'$ centred at $v$. For each $i\in\{1,\dots,k+1\}$, the pair $(w_i,e)$ has weight $k-1$, since every colouring $\phi\in\mathcal{B}_e$ is determined by $\{w_j\}$ for any $j\neq i$. 
	
	Consider a vertex $v$ of $G$. Then $E_0(v)\leq\Delta$. Now consider a pair $(v,e)$ with non-zero weight. Then $(v,e)$ has weight $k-1$, and $e=\{w_1,\dots,w_k,v\}$ is centred at some vertex $u$, for some vertices $w_1,\dots,w_k\in N_G(u)\setminus\{v\}$. There are at most $\Delta$ choices for $u$ and at most $\binom{\Delta-1}{k}$ choices for $w_1,\dots,w_k$. Thus $E_{k-1}(v) \leq \Delta \binom{\Delta-1}{k}$. 
	Hence
	\begin{align*}
 \beta + \sum_{i\geq0}  E_i(v) \, \beta^{-i} 
\le  
\beta + \Delta + \Delta \binom{\Delta-1}{k} \, \beta^{1-k}  
	=  \Delta  + \frac{k\beta}{k-1} 	\leq  c.
	\end{align*}  
	The result follows from \cref{General}.
\end{proof}


Since $k (k-1)^{-1+1/k} \to 1$ and $\binom{\Delta-1}{k}^{1/k} \leq \frac{e}{k}(\Delta-1)$, 
\cref{Frugal} implies this:  


\begin{cor}\label{cy:asyfrugal}
As $\Delta> k\rightarrow\infty$, for every $\ceil{(e+o(1))\Delta^{1+1/k}/k}$-list-assignment $L$ of a graph $G$ with maximum degree $\Delta$, the number of $k$-frugal $L$-colourings of $G$ is at least $\beta^{|V(G)|}$.
\end{cor}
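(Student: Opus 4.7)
The plan is to deduce the corollary directly from \cref{Frugal} by estimating the quantity $c = \Delta + \lceil k\beta/(k-1)\rceil$ of that theorem asymptotically. Since \cref{Frugal} already supplies the lower bound $\beta^{|V(G)|}$ on the number of $k$-frugal $L$-colourings, the work reduces to showing $c \leq \lceil (e+o(1))\Delta^{1+1/k}/k\rceil$ as $\Delta > k \to \infty$.

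First I would rewrite the main contribution to $c$ as
\begin{equation*}
\frac{k\beta}{k-1} \;=\; k(k-1)^{-1+1/k} \cdot \Delta^{1/k} \cdot \binom{\Delta-1}{k}^{1/k}.
\end{equation*}
The two preamble facts stated just before the corollary now handle each factor. The elementary Stirling-type inequality $k! \geq (k/e)^k$ (e.g.\ from the series $e^k \geq k^k/k!$) gives
\begin{equation*}
\binom{\Delta-1}{k}^{1/k} \;\leq\; \frac{\Delta-1}{(k!)^{1/k}} \;\leq\; \frac{e(\Delta-1)}{k},
\end{equation*}
while $k(k-1)^{-1+1/k} = \tfrac{k}{k-1}(k-1)^{1/k} \to 1$ as $k\to\infty$, since $\log(k-1)/k\to 0$. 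Multiplying the three factors yields
\begin{equation*}
\frac{k\beta}{k-1} \;\leq\; (1+o(1)) \cdot \Delta^{1/k} \cdot \frac{e(\Delta-1)}{k} \;\leq\; (e+o(1))\,\frac{\Delta^{1+1/k}}{k}.
\end{equation*}

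It remains to absorb the additive $\Delta$ in the expression for $c$ into the leading term. This uses $\Delta / (\Delta^{1+1/k}/k) = k/\Delta^{1/k}$, which tends to $0$ in the intended joint limit $\Delta > k \to \infty$ (the regime in which the stated bound is sharper than the trivial $\Delta$). Hence $\Delta = o(\Delta^{1+1/k}/k)$, giving $c \leq \lceil (e+o(1))\Delta^{1+1/k}/k\rceil$, and \cref{Frugal} finishes the proof.

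The main obstacle is this last bookkeeping step: one must interpret "$\Delta > k \to \infty$" as the regime in which $\Delta^{1/k}/k \to \infty$, so that both the $\Delta$ term and the implicit $o(1)$ contributions (from $k(k-1)^{-1+1/k}\to 1$ and from $(\Delta-1)/\Delta \to 1$) vanish simultaneously. All remaining steps are routine simplifications of the bound in \cref{Frugal}.
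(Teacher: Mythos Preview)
Your argument is correct and follows the same route as the paper: the proof in the paper consists precisely of the two facts you invoke, namely $k(k-1)^{-1+1/k}\to 1$ and $\binom{\Delta-1}{k}^{1/k}\le \frac{e}{k}(\Delta-1)$, applied to the expression for $c$ in \cref{Frugal}. Your treatment is in fact more careful than the paper's, which does not explicitly discuss absorbing the additive $\Delta$ term; your observation that this requires $k/\Delta^{1/k}\to 0$ (and hence a suitable reading of ``$\Delta>k\to\infty$'') is a genuine clarification of an imprecision in the corollary's statement.
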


Note that Alon's example in \citep{HMR97} shows that \cref{cy:asyfrugal} is within a factor of $2e+o(1)$ of optimal.

\subsection{Independent Transversals and Constrained Colourings}

Consider a hypergraph $G$. A set $X\subseteq V(G)$ is \defn{independent} if no edge of $G$ is a subset of $X$. Consider a partition $V_1,\dots,V_n$ of $V(G)$. A \defn{transversal} of $V_1,\dots,V_n$ is a set $X$ such that $|X\cap V_i|=1$ for each $i$. Let $\ell:V(G)\to\{1,\dots,n\}$ be the function where $\ell(v):=i$ for each vertex $v\in V_i$. For $S\subseteq V(G)$, let $\ell(S):=\{\ell(v):v\in S\}$. 
An edge $e$ of $G$ is \defn{stretched} by $V_1,\dots,V_n$ if $|\ell(e)|=|e|$. 
The following theorem provides a condition that guarantees an independent transversal. 

\begin{thm}
	\label{ISH}
	Fix integers $r\geq 2$ and $t\geq 1$. For an $r$-uniform hypergraph $G$, let $V_1,\dots,V_n$ be a partition of $V(G)$ such that $|V_i| \geq t$ and at most $r^{-r} (r-1)^{r-1} t^{r-1}\,|V_i|$ stretched edges in $G$ intersect $V_i$, for each $i \in\{1,\dots,n\}$. Then there exist at least $(\frac{r-1}{r}t)^n$ independent transversals of $V_1,\dots,V_n$. 
\end{thm}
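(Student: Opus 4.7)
The plan is to encode the transversal problem as an instance of the general framework and apply \cref{General}. I would define a hypergraph $H$ with $V(H) := \{1, \dots, n\}$ and list assignment $L(i) := V_i$. For each stretched edge $e$ of $G$, I would add one edge $f_e := \ell(e)$ to $H$ (so $|f_e| = r$, and parallel edges are allowed), and set $\mathcal{B}_{f_e}$ to be the single colouring $\phi \colon f_e \to V(G)$ given by $\phi(\ell(v)) = v$ for every $v \in e$. An $L$-colouring of $H$ is exactly a transversal of $V_1,\dots,V_n$, and it is $\mathcal{B}$-good exactly when the corresponding transversal is independent---any non-stretched edge has two vertices in some common $V_i$, so it cannot be contained in a transversal and needs no corresponding bad set. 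Hence $P(H,\mathcal{B},L)$ equals the number of independent transversals of $V_1,\dots,V_n$.

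Next I would compute weights. Since $\mathcal{B}_{f_e}$ is a singleton, the empty subset of $f_e \setminus \{i\}$ already determines it, so every pair $(i, f_e)$ has weight $|f_e|-1 = r-1$. Thus $E_{r-1}(i)$ is exactly the number of stretched edges of $G$ meeting $V_i$, which by hypothesis is at most $r^{-r}(r-1)^{r-1}t^{r-1}\,|V_i|$, while $E_k(i)=0$ for $k \neq r-1$. The natural choice of parameters is $\beta := \tfrac{r-1}{r}\,t$ (so that the lower bound $\beta^n$ matches the target) together with $c_i := |V_i|$. A one-line computation gives $\beta^{-(r-1)} r^{-r}(r-1)^{r-1} t^{r-1} = 1/r$, so \cref{Key} reduces to $|V_i| \geq \beta + |V_i|/r$, which rearranges to $|V_i| \geq t$---precisely the hypothesis. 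Iterating \cref{GeneralInduction} would then yield $P(H,\mathcal{B},L) \geq \beta^n = \bigl(\tfrac{r-1}{r}\,t\bigr)^n$, as required.

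The main obstacle is really only a minor technicality: \cref{General} and \cref{GeneralInduction} are stated for a uniform $c$-list assignment, whereas here I need variable list sizes $c_i = |V_i|$. This is purely cosmetic, since the induction in \cref{GeneralInduction} uses the list size only at the single vertex being removed; the proof applies verbatim once $c$ is replaced throughout by $|L(v)|$ and \cref{Key} is checked at each vertex separately. (Naively truncating each $V_i$ to $t$ elements is tempting but loses sharpness in the edge-count hypothesis, which is stated per-part in terms of $|V_i|$, so I would avoid that route.) A secondary boundary point is that invoking \cref{GeneralInduction} formally requires $\beta \geq 1$, i.e.\ $t \geq r/(r-1)$; the only exceptional case is $r = 2$, $t = 1$, where the claimed bound $\beta^n \leq 1$ is vacuously satisfied by the non-negative integer count of independent transversals.
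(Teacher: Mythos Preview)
Your encoding is exactly the paper's: the same auxiliary hypergraph on $\{1,\dots,n\}$ with $L(i)=V_i$, singleton bad sets (hence weight $r-1$ for every pair), and $\beta=\frac{r-1}{r}t$. The one real difference is how the non-uniform list sizes are handled. The paper actually \emph{does} truncate each $V_i$ down to size $t$, and contrary to your worry this does not lose sharpness: since each stretched edge meets $V_i$ in exactly one vertex, deleting a maximum-degree vertex of $V_i$ removes at least the average number $r^{-r}(r-1)^{r-1}t^{r-1}$ of stretched edges meeting $V_i$, so the per-part hypothesis survives with $|V_i|-1$ in place of $|V_i|$ (and counts for other parts only decrease). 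After truncation one applies \cref{General} directly with the uniform $c=t$. Your variable-$c$ route is equally valid---indeed the paper uses that very idea later in the proof of \cref{LastTheorem}---so this is a matter of taste.

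Your boundary discussion has two slips. First, $\beta=\frac{r-1}{r}t<1$ whenever $t=1$ for \emph{every} $r\geq 2$ (since $r/(r-1)\in(1,2]$), not only for $r=2$. Second, the $t=1$ case is not vacuous: $\bigl(\frac{r-1}{r}\bigr)^n>0$, so one still owes at least one independent transversal, and ``non-negative integer'' does not deliver that. (To be fair, the paper's proof glosses over this same point.) A clean patch: after truncating to $|V_i|=1$, the hypothesis allows at most $r^{-r}(r-1)^{r-1}<1$ stretched edges through each part, hence none at all, and the unique transversal is independent.
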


\begin{proof}
Non-stretched edges do not influence whether a transversal is independent, so we may assume that every edge is stretched.  We may also assume that $|V_i|=t$, since if $|V_i|>t$ and $v$ is a vertex in $V_i$ with maximum degree, then by removing $v$ and its incident edges we obtain another hypergraph satisfying the assumptions. Let $X$ be the hypergraph with $V(X):=\{1,\dots,n\}$, where for each edge $\{v_1,\dots,v_r\}$ of $G$ there is an edge $e=\{\ell(v_1),\dots,\ell(v_r)\}$ in $X$. By assumption, each vertex $i$ of $X$ has degree at most 
$r^{-r} (r-1)^{r-1} t^{r-1}|V_i| = r^{-r} (r-1)^{r-1} t^r$. Let $L$ be the list-assignment of $X$ with $L(i):=V_i$ for each $i\in\{1,\dots,n\}$. For each edge $e$ of $X$ corresponding to edge $\{v_1,\dots,v_r\}$ of $G$, let $\mathcal{B}_e$ be the set consisting of the $L$-colouring $\phi$ of $e$ with $\phi(\ell(v_j))=v_j$ for each $j\in\{1,\dots,r\}$. Thus $\mathcal{B}$-good $L$-colourings of $X$ correspond to independent transversals of $V_1,\dots,V_n$. Since $\mathcal{B}_e$ is determined by $\emptyset$, each pair $(i,e)$ 
has weight $r-1$. Define $\beta:= \frac{r-1}{r}t$. Then 
$$|L(i)| = t = \beta +  \frac{(r-1)^{r-1}\, t^r}{r^r\,\beta^{r-1}}   \geq \beta + \frac{E_{r-1}(i) }{\beta^{r-1}}.$$ 
Thus \cref{Key} holds and the result follows from \cref{General}. 
\end{proof}

\citet{EGL94} study independent transversals in a particular family of sparse hypergraphs. They define an \defn{$[n,k,r]$-hypergraph} to be an $r$-uniform hypergraph $G$ whose vertex set $V(G)$ is partitioned into $n$ sets $V_1, \dots, V_n$, each with $k$ vertices, such that every edge is stretched by $V_1,\dots,V_n$ and for every $r$-element subset $S$ of $\{1,2,\dots,n\}$ there is exactly one edge $e\in E(G)$ such that $\ell(e)=S$. \citet{EGL94} defined $f_r(k)$ to be the maximum integer $n$ such that every $[n,k,r]$-hypergraph has an independent transversal. 
Using the Lov\'asz Local Lemma, they proved that if 
\begin{equation}
\label{TheirEqn}
e \left(\binom{n}{r} - \binom{n-r}{r} \right) < k^r,
\end{equation}
then $f_r(k) \geq n$.
Observe that for every $[n,k,r]$-hypergraph $G$ with partition $V_1,\dots,V_n$, for each $i\in\{1,\dots,n\}$, exactly $\binom{n-1}{r-1}$ edges of $G$ intersect $V_i$. Thus \cref{ISH} implies that if 
\begin{equation}
\label{OurEqn}
\binom{n-1}{r-1} \leq \frac{(r-1)^{r-1} k^{r}}{r^r},
\end{equation}
then $f_r(k)\geq n$. We now compare these last two results. Consider $r$ to be fixed. As $k$ grows, the largest $n$ satisfying \cref{TheirEqn} or \cref{OurEqn} also grows, so we can think of $n$ being large relative to $r$. Then
\begin{align}
&\hspace{-2em}\frac{(r-1)^{r-1}\left[\binom{n}{r}-\binom{n-r}{r}\right]}{r^r\binom{n-1}{r-1}}\nonumber \\
&=\left(\frac{r-1}{r}\right)^{r-1}\frac{n}{r^2}\left[1-\frac{(n-r)!^2}{n!(n-2r)!}\right]\nonumber\\
&=\left(\frac{r-1}{r}\right)^{r-1}\frac{n}{r^2}\left[1-\prod_{i=0}^{r-1}\frac{n-r-i}{n-i}\right]\nonumber\\
&\ge \left(\frac{r-1}{r}\right)^{r-1}\frac{n}{r^2}\left[1-\left(\frac{n-r}{n}\right)^{r}\,\right]\nonumber\\
&=\left(\frac{r-1}{r}\right)^{r-1}\left[1-\binom{r}{2}\frac{1}{n}+\frac{n}{r^2}\sum_{i=2}^{\lceil r/2\rceil}\left(\binom{r}{2i-1}\left(\frac{r}{n}\right)^{2i-1}-\binom{r}{2i}\left(\frac{r}{n}\right)^{2i}\right)\right]\nonumber\\
&\ge \left(\frac{r-1}{r}\right)^{r-1}\left[1-\frac{r^2}{2n}+\frac{n}{r^2}\sum_{i=2}^{\lceil r/2\rceil}\binom{r}{2i-1}\left(\frac{r}{n}\right)^{2i-1}\left(1-\frac{(r-2i+1)r}{2in}\right)\right]\nonumber\\
&\ge\left(\frac{r-1}{r}\right)^{r-1}\left[1-\frac{r^2}{2n}\right]\label{e:bnd}
\end{align}
if $n\geq{r^2}/{4}$. Also $(1-1/r)^{r-1}>1/e$. Hence, if $n$ is sufficiently large relative to $r$, then \cref{e:bnd} will exceed $1/e$, and \cref{OurEqn} implies \cref{TheirEqn}. In other words, our bound on $f_r(k)$ is better when $k$ is sufficiently large relative to $r$. \citet{Yuster-CPC97,Yuster-DM97} used a different argument to get a better bound in the case of graphs ($r=2$). 

\cref{ISH} in the case of graphs says:

\begin{cor}
\label{IS}
Fix an integer $t\geq 1$. For a graph $G$, let $V_1,\dots,V_n$ be a partition of $V(G)$ such that $|V_i| \geq t$ and there are at most $\frac{t}{4}|V_i|$ edges in $G$ with exactly one endpoint in $V_i$, for each $i \in\{1,\dots,n\}$. Then there exist at least $(\frac{t}{2})^n$ independent transversals of $V_1,\dots,V_n$. 
\end{cor}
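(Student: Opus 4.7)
The plan is to derive this as the direct $r=2$ specialization of \cref{ISH}. First I would interpret the graph $G$ as a 2-uniform hypergraph. An edge $\{u,v\}$ of $G$ is then stretched by the partition $V_1,\dots,V_n$ precisely when $u$ and $v$ lie in different parts. Consequently, for each $i$, the set of stretched edges that intersect $V_i$ coincides with the set of edges of $G$ having exactly one endpoint in $V_i$. (Edges with both endpoints inside a single $V_i$ are irrelevant to independent transversals, since a transversal contains exactly one vertex from each part.)

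Next I would substitute $r=2$ into the two quantitative parameters of \cref{ISH}. The threshold on stretched edges incident to $V_i$ becomes
\[
r^{-r}(r-1)^{r-1} t^{r-1}\,|V_i| \;=\; \tfrac{1}{4}\cdot 1 \cdot t \cdot |V_i| \;=\; \tfrac{t}{4}\,|V_i|,
\]
which is exactly the hypothesis of \cref{IS}. Similarly, the lower bound on the number of independent transversals becomes
\[
\Bigl(\tfrac{r-1}{r}\,t\Bigr)^{n} \;=\; \Bigl(\tfrac{t}{2}\Bigr)^{n},
\]
which matches the conclusion of \cref{IS}. So the hypothesis and conclusion of \cref{IS} are precisely the $r=2$ instance of those in \cref{ISH}, and the corollary follows immediately by invoking \cref{ISH}.

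There is no real obstacle in this proof beyond the routine arithmetic check above; the substantive content has already been done in \cref{ISH} (and indirectly, via \cref{General}, in \cref{GeneralInduction}). The only minor point worth being careful about is the translation between ``stretched edges intersecting $V_i$'' in the hypergraph language and ``edges with exactly one endpoint in $V_i$'' in the graph language, which is unambiguous for $r=2$.
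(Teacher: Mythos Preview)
Your proposal is correct and takes essentially the same approach as the paper, which simply states that \cref{IS} is ``\cref{ISH} in the case of graphs''. Your explicit verification that stretched edges intersecting $V_i$ are exactly the edges with one endpoint in $V_i$, together with the arithmetic check of the two parameters at $r=2$, is all that is needed.
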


\cref{IS} immediately implies the following result (since the average degree out of $V_i$ is at most the maximum degree). 

\begin{cor}
 \label{ISbasic}
 For a graph $G$ with maximum degree at most $\Delta$, if $V_1,\dots,V_n$ is a partition of $V(G)$ such that $|V_i| \geq 4\Delta$ for each $i \in\{1,\dots,n\}$, then there exist at least $(2\Delta)^n$ independent transversals of $V_1,\dots,V_n$. 
\end{cor}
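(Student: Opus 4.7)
The plan is to derive \cref{ISbasic} as a direct instance of \cref{IS} by setting the parameter $t := 4\Delta$. With this choice, the lower bound on $|V_i|$ becomes $|V_i|\geq t$, which is exactly the hypothesis of \cref{ISbasic}, and the target count becomes $(t/2)^n = (2\Delta)^n$, matching the desired conclusion.

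The only thing to check is the edge-count hypothesis of \cref{IS}, namely that at most $\tfrac{t}{4}|V_i| = \Delta|V_i|$ edges have exactly one endpoint in $V_i$. I would prove this via the standard double-counting identity: each edge with exactly one endpoint in $V_i$ contributes exactly one to the sum $\sum_{v\in V_i} \deg_G(v)$ (edges inside $V_i$ contribute two, edges disjoint from $V_i$ contribute zero). Hence the number of edges with exactly one endpoint in $V_i$ is at most
\[
\sum_{v\in V_i}\deg_G(v) \;\leq\; \Delta\,|V_i| \;=\; \tfrac{t}{4}\,|V_i|,
\]
which is exactly the hypothesis required by \cref{IS}.

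With both hypotheses verified, \cref{IS} delivers at least $(t/2)^n = (2\Delta)^n$ independent transversals of $V_1,\dots,V_n$, completing the derivation.

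There is essentially no obstacle here; the derivation is a routine specialisation. The only substantive observation is the one the author flags parenthetically: the average out-degree from $V_i$ is bounded by the maximum degree $\Delta$, which is what lets the weaker ``edges crossing $V_i$'' hypothesis of \cref{IS} be replaced by the cleaner uniform bound $|V_i|\geq 4\Delta$.
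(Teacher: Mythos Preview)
Your proposal is correct and follows exactly the paper's own route: the paper derives \cref{ISbasic} immediately from \cref{IS} by the parenthetical remark that the average degree out of $V_i$ is at most the maximum degree, and your double-counting argument is precisely the justification of that remark with $t=4\Delta$.
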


We now compare \cref{IS,ISbasic} with the literature. \citet{RW12} proved the weakening of \cref{IS} with $\frac{t}{4}$ replaced by $\frac{t}{2e}$ and with $(\frac{t}{2})^n$ replaced by 1, and \citet{DEKO20} noted that \cref{IS} holds with $(\frac{t}{2})^n$ replaced by $1$ (using different terminology).  Similarly, \citet{Alon94} proved the weakening of \cref{ISbasic} with $4\Delta$ replaced by $2e\Delta$ and with $(2\Delta)^n$ replaced by 1. The proofs of \citet{RW12} and \citet{Alon94} used the Lov\'asz Local Lemma, while the proof of \citet{DEKO20} used the Local Cut Lemma. Using a different method, \citet{Haxell01} proved the strengthening of \cref{ISbasic} with $4\Delta$ replaced by $2\Delta$, but with $(2\Delta)^n$ replaced by 1. The bound here of $2\Delta$ is best possible \citep{BES-DM75,Yuster-DM97}. It is open whether $\frac{t}{4}$ in \cref{IS} can be improved to $\frac{t}{2}$; see \citep{KK20}. See \citep{LS07,Yuster-CPC97,GS20} for more on independent transversals in graphs.

These results are related to the following `constrained colouring' conjecture of \citet{Reed99}:

\begin{conj}[\citep{Reed99}]
 \label{ReedConj}
 Let $L$ be a $(k+1)$-list assignment of a graph $G$ such that for each vertex $v$ of $G$ and colour $c \in L(v)$, there are at most $k$ neighbours $w\in N_G(v)$ with $c\in L(w)$. Then there exists a proper $L$-colouring of $G$. 
\end{conj}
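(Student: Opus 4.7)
A naive attempt to deduce \cref{ReedConj} from \cref{General} fails immediately: taking $G':=G$ with $\mathcal{B}_e=\{\phi\in\mathcal{C}_e:\phi(v)=\phi(w)\}$ for each edge $e=\{v,w\}$ and $c=k+1$, every pair $(v,e)$ has weight $0$ and $E_0(v)\leq\Delta$, so \cref{Key} forces $\Delta\leq k$, which is vastly stronger than Reed's hypothesis. The root issue is that \cref{General} is agnostic to $L$: it insists on $\mathcal{B}$-good colourings for \emph{every} $c$-list-assignment, whereas Reed's hypothesis is a structural restriction on $L$ itself. A more refined encoding would use a parallel edge $e_{v,w,i}$ for each triple with $i\in L(v)\cap L(w)$, setting $\mathcal{B}_{e_{v,w,i}}=\{\phi:\phi(v)=\phi(w)=i\}$; then each such pair has weight $1$ and $E_1(v)\leq k(k+1)$ by Reed's condition, but \cref{Key} still demands $c\geq 2\sqrt{k(k+1)}\sim 2k$, a constant-factor weakening of the conjectured bound.

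To chase the exact $(k+1)$ bound, I would bypass \cref{General} and run the induction of \cref{GeneralInduction} directly for the given $L$. The target is to prove, for every induced $H\subseteq G$ and every $v\in V(H)$, that $P(H,L)\geq P(H-v,L)$, whence $P(G,L)\geq 1$ follows. Expanding $P(H,L)=(k+1)P(H-v,L)-|X|$, where $X$ is the set of proper $L$-colourings of $H-v$ whose extension to $v$ creates a conflict, and decomposing $X=\bigsqcup_{i\in L(v)}X_i$ by the offending colour, one bounds $|X_i|$ by summing, over the at most $k$ neighbours $w$ of $v$ with $i\in L(w)$, the number of proper $L$-colourings $\phi$ of $H-v$ with $\phi(w)=i$. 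The remaining step would be a ``fair share'' strengthening of the inductive invariant: $\#\{\phi\text{ proper on }H-v:\phi(w)=i\}\leq\tfrac{1}{k+1}P(H-v,L)$ for every $w\in V(H-v)$ and every $i\in L(w)$. Combined with Reed's condition, this yields $|X|\leq(k+1)\cdot k\cdot\tfrac{1}{k+1}P(H-v,L)=k\,P(H-v,L)$, as required.

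The main obstacle is the fair-share property, which is essentially why \cref{ReedConj} has resisted proof. The uniform distribution on proper $L$-colourings of $H-v$ need not place mass at most $\tfrac{1}{k+1}$ on any single colour at $w$: small subgraphs can concentrate almost all the mass on a few colours, breaking the proposed invariant. I would therefore expect this approach, in either of the forms above, to succeed only under a relaxed hypothesis---replacing $k+1$ by something of order $2k$, recovering the flavour of Reed--Sudakov-type partial results---and to leave the conjecture itself untouched. Closing the remaining factor-of-$2$ gap appears to require a genuinely new idea beyond counting inductions of the kind that drive \cref{General}.
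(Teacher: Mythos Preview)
The statement you are attempting to prove is labelled \emph{Conjecture} in the paper for good reason: the paper does not prove it, and in fact explicitly records that \citet{BH02} \emph{disproved} it. Consequently no argument---yours, the paper's, or anyone else's---can establish \cref{ReedConj} as stated. Your own diagnosis is therefore exactly right in spirit: the ``fair share'' invariant $\#\{\phi:\phi(w)=i\}\leq\tfrac{1}{k+1}P(H-v,L)$ is not merely hard to prove, it is false in general, and must be, since its truth would imply the (false) conjecture. The paper only uses \cref{General} to obtain the relaxed version with $k+1$ replaced by $4k$ (\cref{ConstrainedBasic}), via the Haxell reduction to independent transversals; the $2k$ bound you allude to is Haxell's and uses a different, non-counting argument.

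There is also a slip in your refined encoding. You fix $|L(v)|=k+1$ when bounding $E_1(v)\leq k(k+1)$, but then let $c$ float when reading off \cref{Key}. If instead $|L(v)|=c$ is the parameter to be determined, Reed's hypothesis gives $E_1(v)\leq ck$, and \cref{Key} becomes $c\geq\beta+ck\beta^{-1}$; this has a solution $\beta\geq1$ only when $c\geq 4k$, not $2\sqrt{k(k+1)}$. So your encoding reproduces exactly the paper's $4k$ bound rather than improving it to $\sim 2k$.
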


\citet{Haxell01} observed the following connection between constrained colourings and independent transversals. Consider an $f(k)$-list-assignment $L$ of a graph $G$. Let $H$  be the graph with $V(H):=\{(v,c):v\in V(G), c\in L(v)\}$, where $(v,c)(w,c)\in E(H)$ for each edge $vw\in E(G)$ and colour $c\in L(v)\cap L(w)$. Let $H_v:=\{(v,c):c\in L(v)\}$. Then $(H_v:v\in V(G))$ is a partition of $H$ with each $|H_v|\geq f(k)$ such that proper $L$-colourings of $G$ correspond to independent transversals of $(H_v:v\in V(G))$. Now if we assume that for each vertex $v$ and colour $c\in L(v)$ there are at most $k$ neighbours $w\in N_G(v)$ with $c\in L(w)$, then $H$ has maximum degree at most $k$. Hence the above-mentioned result of \citet{Alon94} proves \cref{ReedConj} with $k+1$ replaced by $2ek$ (also proved by \citet{Reed99}), and the above-mentioned result of \citet{Haxell01} proves \cref{ReedConj} with $k+1$ replaced by $2k$. \citet{BH02} disproved \cref{ReedConj}. The best asymptotic result, due to \citet{RS02}, says that for each $\epsilon>0$ there exists $k_0$ such that \cref{ReedConj} holds with $k+1$ replaced by $(1+\epsilon)k$ for all $k\geq k_0$. None of these results conclude that there are exponentially many colourings. \cref{IS} and the above connection by \citet{Haxell01} implies the following result:

\begin{cor}
\label{Constrained} 
Fix an integer $t\geq 2$. Let $L$ be a $t$-list assignment of a graph $G$ such that for each vertex $v$ of $G$, 
 $$ 4\sum_{w\in N_G(v)}\!\! |L(v)\cap L(w) | \,\leq\, t^2.$$ 
Then there exist at least $(\frac{t}{2})^{|V(G)|}$ proper $L$-colourings of $G$.
\end{cor}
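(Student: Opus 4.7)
The plan is to apply \cref{IS} directly to the auxiliary graph $H$ that encodes $L$-colourings as independent transversals, as set up by \citet{Haxell01} in the paragraph immediately preceding the corollary. So I would define $H$ with vertex set $\{(v,c): v\in V(G), c\in L(v)\}$, edges $(v,c)(w,c)$ for each $vw\in E(G)$ and each $c\in L(v)\cap L(w)$, and partition parts $H_v:=\{(v,c):c\in L(v)\}$. Each part has exactly $t$ vertices, and proper $L$-colourings of $G$ are exactly independent transversals of $(H_v:v\in V(G))$, with $n=|V(G)|$ parts.

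The key step is to translate the hypothesis into the degree condition required by \cref{IS}. An edge of $H$ with exactly one endpoint in $H_v$ must be of the form $(v,c)(w,c)$ for some neighbour $w\in N_G(v)$ and some shared colour $c\in L(v)\cap L(w)$; each such pair $(w,c)$ produces a unique edge. Hence the number of edges of $H$ with exactly one endpoint in $H_v$ equals $\sum_{w\in N_G(v)}|L(v)\cap L(w)|$, and the hypothesis $4\sum_{w\in N_G(v)}|L(v)\cap L(w)|\le t^2$ is precisely the statement that this count is at most $\tfrac{t}{4}|H_v|$.

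Now \cref{IS} applied to $H$ with parameter $t$ yields at least $(\tfrac{t}{2})^n=(\tfrac{t}{2})^{|V(G)|}$ independent transversals of $(H_v:v\in V(G))$, and by the correspondence these are exactly the proper $L$-colourings of $G$. There is no real obstacle; the only thing to be slightly careful about is the accounting that turns the neighbour-sum into the cross-edge count in $H$, but once one observes that each triple $(v,w,c)$ with $c\in L(v)\cap L(w)$ gives exactly one edge across the cut, the reduction is immediate.
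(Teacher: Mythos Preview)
Your proposal is correct and follows exactly the approach the paper indicates: it says just before the corollary that \cref{IS} together with Haxell's auxiliary-graph construction yields \cref{Constrained}, and you have simply written out that derivation in detail. The only remark is that every edge of $H$ goes between distinct parts, so ``edges with exactly one endpoint in $H_v$'' is the same as ``edges incident to $H_v$''; your count $\sum_{w\in N_G(v)}|L(v)\cap L(w)|$ is then immediately the required quantity, matching the hypothesis of \cref{IS}.
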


Taking $t=4k$ we obtain the following result in the direction of \cref{ReedConj}:

\begin{cor}
 \label{ConstrainedBasic}
 Let $L$ be a $4k$-list assignment of a graph $G$ such that for each vertex $v$ of $G$ and colour $c \in L(v)$, there are at most $k$ neighbours $w\in N_G(v)$ such that $c\in L(w)$. Then there exist at least $(2k)^{|V(G)|}$ proper $L$-colourings of $G$.
\end{cor}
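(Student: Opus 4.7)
The plan is to derive \cref{ConstrainedBasic} as a direct specialisation of \cref{Constrained} with $t := 4k$, so the target multiplicity $(\tfrac{t}{2})^{|V(G)|}$ becomes $(2k)^{|V(G)|}$ automatically. The only thing to check is that the hypothesis of \cref{Constrained} follows from the pointwise condition we are given, namely that for each vertex $v$ of $G$ and each colour $c \in L(v)$ at most $k$ neighbours $w \in N_G(v)$ satisfy $c \in L(w)$.

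To verify this, I would fix a vertex $v$ and rewrite the sum
$$\sum_{w\in N_G(v)} |L(v)\cap L(w)|$$
by switching the order of summation. Since $|L(v)\cap L(w)|$ counts colours $c \in L(v)$ that also appear in $L(w)$, this sum is exactly
$$\sum_{c\in L(v)} \big|\{w\in N_G(v) : c\in L(w)\}\big|.$$
The hypothesis of \cref{ConstrainedBasic} bounds each inner cardinality by $k$, and there are $|L(v)|=4k$ summands, so the total is at most $4k\cdot k = 4k^2$. Multiplying by $4$ gives $16k^2 = (4k)^2 = t^2$, which is precisely the inequality required by \cref{Constrained}.

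With the hypothesis verified, \cref{Constrained} yields at least $(t/2)^{|V(G)|} = (2k)^{|V(G)|}$ proper $L$-colourings of $G$, as desired. There is no substantive obstacle here; the argument is essentially a one-line double-counting reduction, and the only subtlety is keeping track of the factor of $4$ in the hypothesis of \cref{Constrained} so that the list size $4k$ (rather than something smaller like $2k$) is the right choice.
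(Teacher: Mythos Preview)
Your proposal is correct and matches the paper's approach exactly: the paper simply says ``Taking $t=4k$'' before stating \cref{ConstrainedBasic}, and the double-counting you spell out is precisely the (implicit) verification that the hypothesis of \cref{Constrained} holds.
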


The following  stronger result can also be proved using a variant of \cref{General}. 
  
\begin{thm}
\label{LastTheorem}
Let $L$ be a list-assignment of a graph $G$ such that for every vertex $v$ of $G$,
\begin{equation}
\label{LastAssumption}
|L(v)| \; \geq \; 4 \sum_{w\in N_G(v)} \frac{ |L(v) \cap L(w)| }{ |L(w)| }.
\end{equation}
Then there exist at least $\prod_{v\in V(G)} \frac{|L(v)|}{2}$ proper $L$-colourings.
\end{thm}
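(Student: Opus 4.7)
The plan is to adapt the argument of \cref{GeneralInduction} by allowing a \emph{vertex-dependent} parameter $\beta_v := |L(v)|/2$ in place of the single global $\beta$. Specifically, I would prove by induction on $|V(H)|$ that for every induced subgraph $H$ of $G$ and every vertex $v$ of $H$,
$$P(H, L) \;\geq\; \beta_v \, P(H-v, L),$$
where $P(\cdot, L)$ denotes the number of proper $L$-colourings. Iterating this for any fixed ordering of $V(G)$ yields $P(G, L) \geq \prod_{v\in V(G)} \beta_v = \prod_{v\in V(G)} |L(v)|/2$, which is the desired bound. The base case $|V(H)|=1$ is immediate, since $P(H, L) = |L(v)| \geq |L(v)|/2 = \beta_v = \beta_v \cdot P(\emptyset, L)$.

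For the induction step, I would follow the structure of \cref{GeneralInduction}: write
$$P(H, L) \;=\; |L(v)|\, P(H-v, L) \;-\; |X|,$$
where $X$ is the set of $L$-colourings of $H$ that are proper on $H-v$ but monochromatic on some edge $\{v, w\}$. Each such $\phi$ satisfies $\phi(v) = \phi(w) =: c \in L(v) \cap L(w)$; charge $\phi$ to $(v, \{v, w\})$ (fixing $w$ arbitrarily if several choices work). The crucial step, and the one that departs from \cref{GeneralInduction}, is to bound more finely the number of colourings charged to a fixed $(v, \{v, w\})$. For each $c \in L(v) \cap L(w)$, the number of proper $L$-colourings of $H-v$ with $\phi(w) = c$ is at most $P(H-v-w, L)$, and the induction hypothesis applied to $H-v$ at the vertex $w$ gives $P(H-v-w, L) \leq P(H-v, L)/\beta_w = 2\, P(H-v, L)/|L(w)|$. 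Summing over $w \in N_G(v)$ and $c \in L(v) \cap L(w)$ yields
$$|X| \;\leq\; 2\, P(H-v, L) \sum_{w \in N_G(v)} \frac{|L(v) \cap L(w)|}{|L(w)|}.$$

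Substituting this into the expression for $P(H, L)$ and invoking the hypothesis \cref{LastAssumption} then gives $P(H, L) \geq (|L(v)| - |L(v)|/2)\, P(H-v, L) = \beta_v P(H-v, L)$, completing the induction. I do not anticipate any serious obstacle: the whole point of the argument is that, by exploiting the induction hypothesis \emph{at the neighbour} $w$ rather than at $v$, the bound on $|X|$ acquires the factor $|L(v) \cap L(w)|/|L(w)|$ that appears in \cref{LastAssumption}. The one point requiring care is to formulate the inductive claim as ``$P(H, L) \geq \beta_v P(H-v, L)$ for \emph{every} $v \in V(H)$'', so that it may be invoked at any neighbour of the vertex currently being peeled.
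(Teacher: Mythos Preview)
Your proposal is correct and follows essentially the same approach as the paper's own proof: both prove by induction on $|V(H)|$ that $P(H,L)\ge \tfrac{|L(v)|}{2}\,P(H-v,L)$ for every induced subgraph $H$ and every $v\in V(H)$, bounding the bad set $X$ via $|X_w|\le |L(v)\cap L(w)|\,P(H-v-w,L)$ and then applying the inductive hypothesis at the neighbour $w$ to obtain $P(H-v-w,L)\le \tfrac{2}{|L(w)|}P(H-v,L)$. The only cosmetic difference is that you phrase the bound on $|X|$ via a charging argument rather than a direct union bound over the sets $X_w$.
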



\begin{proof}
We proceed by induction on $|V(H)|$ with the following hypothesis: 
for every induced subgraph $H$ of $G$, and for every vertex $v$ of $H$, 
   $$P(H,L) \;\geq\; \frac{|L(v)|}{2}\, P(H-v,L).$$
   (The proof is very similar to that of \cref{GeneralInduction} except that $\beta$ depends on $v$; in particular, $\beta_v=\frac{|L(v)|}{2}$.)\ 
   The base case with $|V(H)|=1$ is trivial. 
   Let $H$ be an induced subgraph of $G$, and assume the claim holds for all induced subgraphs of $G$ with less than $|V(H)|$ vertices. Let $v$ be any vertex of $H$. 
   Let $X$ be the set of improper $L$-colourings of $H$ that are proper on $H-v$. Then 
   \begin{align}
   \label{FF}
   P(H,L) \;=\; |L(v)| \; P(H-v,L) \,-\,|X|.
   \end{align}
   We now find an upper bound for $|X|$. For $w\in N_G(v)$, let $X_w$ be the set of colourings $\phi$ in $X$ such that $\phi(v)=\phi(w)$. 
   Each $L$-colouring in $X$ is in some $X_w$. Thus
   \begin{align*}
   |X| \;\leq\;  \sum_{w\in N_G(v)}\!\!\! |X_w|  \;\leq\;  \sum_{w\in N_G(v)} P(H-v-w,L) \; |L(v)\cap L(w)|.
   \end{align*} 
   By induction, $P(H-v,L) \;\geq\; \frac{|L(w)|}{2} \, P(H-v-w,L)$. 
   Hence
   \begin{align*}
   \label{XXX}
   |X| \leq \sum_{w\in N_G(v)} \!\!\! \frac{2 |L(v)\cap L(w)|}{|L(w)|} \, P(H-v,L) .
   \end{align*} 
   By \cref{FF}
   \begin{align*}
   P(H,L) \;\geq& \; |L(v)| \; P(H-v,L) \,- \sum_{w\in N_G(v)}\!\!\! \frac{2 |L(v)\cap L(w)|}{|L(w)|} \, P(H-v,L).
   \end{align*}
   By \cref{LastAssumption}, $P(H,L) \geq \frac{|L(v)|}{2} \,P(H-v,L)$, as desired. 
  \end{proof}
 
Note that \cref{LastTheorem} immediately implies \cref{Constrained}, taking $|L(v)|=2t$ for each $v$.

\subsection{Ramsey Numbers}
 
 
For integers $k,c\geq 2$, let \defn{$R_c(k)$} be the minimum integer $n$ such that every edge $c$-colouring of $K_n$ contains a monochromatic $K_k$. \citet{Ramsey30} and \citet{ES35} independently proved that $R_c(k)$ exists.  The best asymptotic lower bound on $R_2(k)$ is due to \citet{Spencer75,Spencer77} who proved that 
 \begin{equation}
 \label{RamseySpencerAsymptotics}
 R_2(k)\geq \bigg(\frac{\sqrt{2}}{e}-o(1)\bigg) k\, 2^{k/2}.
 \end{equation}
 More precisely,  \citet{Spencer75,Spencer77} proved that if 
 \begin{equation}
 \label{RamseySpencer}
 e\binom{k}{2}\left( \binom{n-2}{k-2} + 1 \right) < 2^{\binom{k}{2}-1}, 
 \end{equation}
 then there exists an edge 2-colouring of $K_n$ with no monochromatic $K_k$, implying $R_2(k)>n$.  \cref{General} leads to an analogous result with the same asymptotics, but with slightly better lower order terms. For a graph $G$ and integer $k\geq 2$, let \defn{$D_k(G)$} be the maximum, taken over all edges $vw\in E(G)$, of the number of $k$-cliques in $G$ containing $v$ and $w$. 

 \begin{thm}
	\label{RamseyLemma}
	Fix integers $k\geq 3$ and $c\geq 2$. Let $m:=\binom{k}{2}-1$. Then for every graph $G$ with 
\begin{equation}\label{e:Dbound}
  D_k(G) \leq \frac{(m-1)^{m-1}\,c^m}{m^m},
\end{equation}
	there exists an edge $c$-colouring of $G$ with no monochromatic $K_k$. In fact, there exists at least $\big( D_k(G) (m-1) \big)^{|E(G)|/m}$ such colourings. 
\end{thm}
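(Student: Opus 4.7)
The plan is to cast Theorem \ref{RamseyLemma} as an instance of Theorem \ref{General} applied to an auxiliary hypergraph. Define $G'$ with $V(G') := E(G)$, and for each copy of $K_k$ in $G$ introduce an edge of $G'$ consisting of the $\binom{k}{2} = m+1$ edges of that clique. For each such hyperedge $e$, let $\mathcal{B}_e$ be the set of monochromatic colourings of $e$. Then edge $c$-colourings of $G$ with no monochromatic $K_k$ correspond precisely to $\mathcal{B}$-good $L$-colourings of $G'$, where $L$ is the constant $c$-list-assignment.

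Next, I would analyse the weights. For a hyperedge $e$ of $G'$ and any $v \in e$, knowing the colour of a single other element of $e$ determines any monochromatic colouring of $e$, so each pair $(v,e)$ has weight exactly $(m+1) - 1 - 1 = m - 1$. For a fixed vertex $v$ of $G'$ (i.e.\ an edge of $G$), the hyperedges of $G'$ containing $v$ correspond to $K_k$-subgraphs of $G$ containing the edge $v$, so by definition $E_{m-1}(v) \leq D_k(G)$ and $E_k(v) = 0$ for all $k \neq m-1$.

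Now choose $\beta := \bigl((m-1) D_k(G)\bigr)^{1/m}$, which is the value that minimises $\beta + D_k(G)\,\beta^{-(m-1)}$. A short calculation gives
\begin{equation*}
\beta + D_k(G)\,\beta^{-(m-1)} \;=\; \beta + \frac{\beta}{m-1} \;=\; \frac{m}{m-1}\bigl((m-1)D_k(G)\bigr)^{1/m},
\end{equation*}
and the hypothesis \cref{e:Dbound} of the theorem is exactly the statement that this quantity is at most $c$. Hence Equation \cref{Key} is satisfied for every vertex $v$ of $G'$, so \cref{General} applies and yields at least $\beta^{|V(G')|} = \bigl((m-1)D_k(G)\bigr)^{|E(G)|/m}$ valid colourings.

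There is no serious obstacle; the only real choices are the setup of $G'$ (which mirrors the proper-colouring example following the statement of \cref{General}) and the optimisation of $\beta$. The reason for the slight improvement over \cref{RamseySpencer} is that \cref{General} replaces the factor of $e$ coming from the symmetric Lovász Local Lemma by the sharper factor $\frac{m^m}{(m-1)^{m-1}} \sim e(m-1)$, and the ``$+1$'' term inside Spencer's bound is absorbed because only pairs $(v,e)$ with $v \in e$ contribute to $E_{m-1}(v)$.
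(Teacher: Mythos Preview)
Your proposal is correct and follows essentially the same route as the paper: the same auxiliary hypergraph $G'$ on $V(G')=E(G)$ with hyperedges the edge-sets of $K_k$'s, the same weight computation giving $E_{m-1}(v)\le D_k(G)$, and the same optimising choice $\beta=((m-1)D_k(G))^{1/m}$ plugged into \cref{General}. The only addition is your closing paragraph comparing with Spencer's bound, which is commentary rather than proof.
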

 
  \begin{proof}
 	Let $G'$ be the hypergraph with $V(G'):=E(G)$, where $S\subseteq E(G)$ is an edge of $G'$ whenever $S$ is the edge-set of a $K_k$ subgraph in $G$. For each edge $vw$ of $G$, let $L(vw):=\{1,\dots,c\}$. For each edge $S$ of $G'$, let $\mathcal{B}_S$ be the set of monochromatic $L$-colourings of $S$. Thus $\mathcal{B}$-good  $L$-colourings of $G'$ correspond to edge $c$-colourings of $G$ with no monochromatic $K_k$. Each pair $(v,e)$ 
 	has weight $m-1$, and $E_{m-1}(v)\leq D_k(G)$. Thus \cref{Key} holds if
 	\begin{equation}
 	\label{KeyRamsey}
 	c \geq \beta + D_k(G) \,\beta^{1-m}.
 	\end{equation}
 	To minimise the right-hand side of this expression, define $\beta := \big( D_k(G) \, (m-1) \big)^{1/m}$. Then \cref{e:Dbound} implies
\cref{KeyRamsey}, so the result follows from \cref{General}. 
 \end{proof}
 
 Applying \cref{RamseyLemma} to a complete graph gives the following corollary. 
 
 \begin{cor}
 \label{Ramsey}
 For every integer $k\geq 3$ and $c\geq 2$, if $m:=\binom{k}{2}-1$ and
  $$ \frac{m^m}{(m-1)^{m-1}}  \binom{n-2}{k-2} \leq c^m $$
 then there exists an edge $c$-colouring of $K_n$ with no monochromatic $K_k$, and $R_c(k)>n$. 
\end{cor}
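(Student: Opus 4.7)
The plan is to apply \cref{RamseyLemma} directly with $G=K_n$, so that the task reduces to computing $D_k(K_n)$ and then checking that the hypothesis of \cref{RamseyLemma} becomes the inequality stated in the corollary.

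First I would observe that for any edge $vw$ of $K_n$, every $k$-clique of $K_n$ containing both $v$ and $w$ is specified by choosing the remaining $k-2$ vertices from the $n-2$ vertices outside $\{v,w\}$, and every such choice does yield a $k$-clique because $K_n$ is complete. By symmetry the count is the same for every edge, so $D_k(K_n)=\binom{n-2}{k-2}$.

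Next I would substitute this into \cref{e:Dbound}: the hypothesis $D_k(K_n)\leq\frac{(m-1)^{m-1}c^m}{m^m}$ becomes
$$\binom{n-2}{k-2}\;\leq\;\frac{(m-1)^{m-1}\,c^m}{m^m},$$
which after multiplying both sides by $\frac{m^m}{(m-1)^{m-1}}$ is precisely the hypothesis of the corollary. \cref{RamseyLemma} then guarantees an edge $c$-colouring of $K_n$ with no monochromatic $K_k$, and by the definition of $R_c(k)$ this gives $R_c(k)>n$.

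There is essentially no obstacle here: the substantive content already lives in \cref{RamseyLemma}, and specialising it to the complete graph amounts to a single binomial count. The only care required is to check that $m=\binom{k}{2}-1\geq 2$ so that the exponent $m-1$ is positive and the bound $\frac{(m-1)^{m-1}}{m^m}$ is well defined; this is ensured by the hypothesis $k\geq 3$.
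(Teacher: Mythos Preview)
Your proposal is correct and matches the paper's approach exactly: the paper simply states that applying \cref{RamseyLemma} to $K_n$ yields the corollary, and your computation $D_k(K_n)=\binom{n-2}{k-2}$ together with the rearrangement of \cref{e:Dbound} is precisely what is needed to fill in that one-line deduction.
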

 
 Since $ \frac{m^m}{(m-1)^{m-1}} < em = e\big( \tbinom{k}{2}-1 \big)$,  \cref{Ramsey} is slightly stronger than \cref{RamseySpencer}. While this improvement only changes the implicit lower order term in \cref{RamseySpencerAsymptotics}, we consider it to be of interest, since it suggests a new approach for proving lower bounds on $R_c(k)$. 
 
\subsection{$k$-SAT}

The $k$-SAT problem takes as input a Boolean formula $\psi$ in conjunctive normal form, where each clause has exactly  $k$ distinct literals, and asks whether there is a satisfying truth assignment for $\psi$. The Lov\'asz Local Lemma proves that if each  variable is in at most $\frac{2^k}{ke}$ clauses, then there exists a satisfying truth assignment; see \citep{GMSW09} for a thorough discussion of this topic. The following result (slightly) improves upon this bound (since $\big( \frac{k-1}{k} \big)^{k-1} > \frac{1}{e}$), and moreover, guarantees exponentially many truth assignments. 


\begin{thm}
 \label{kSAT} 
 Let $\psi$ be a Boolean formula in conjunctive normal form, with variables $v_1,\dots,v_n$ and clauses $c_1,\dots,c_m$, each with exactly $k$ literals. Assume that each variable is in at most $\Delta := \frac{2^k}{k} 
 \big( \frac{k-1}{k} \big)^{k-1} $ clauses. Then there exists a satisfying truth assignment for $\psi$. In fact, there are at least $(2-\frac{2}{k})^n$ such truth assignments. 
\end{thm}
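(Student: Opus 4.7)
The plan is to apply \cref{General} with the natural encoding of $k$-SAT as a hypergraph colouring problem. Let $G$ be the hypergraph with $V(G):=\{v_1,\dots,v_n\}$, and with one edge $e_j\subseteq V(G)$ for each clause $c_j$, namely the set of $k$ variables that appear (positively or negatively) in $c_j$. Take colours to be truth values $\{0,1\}$, so that every vertex receives the constant $2$-list $L(v):=\{0,1\}$. For each clause $c_j$, let $\mathcal{B}_{e_j}$ consist of the single colouring of $e_j$ that falsifies every literal of $c_j$ (i.e.\ the unique assignment to the $k$ variables of $c_j$ that makes $c_j$ evaluate to false). Then $\mathcal{B}$-good $L$-colourings of $G$ are exactly the satisfying truth assignments of $\psi$.

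The weight computation is immediate: since $\mathcal{B}_{e_j}$ is a single colouring, it is already determined by $S=\emptyset$, so every pair $(v,e_j)$ with $v\in e_j$ has weight $|e_j|-1 = k-1$. Hence for each variable $v$, $E_{k-1}(v) \leq \Delta$ and $E_i(v)=0$ for $i\neq k-1$. To apply \cref{General} we need to choose $\beta\geq 1$ so that
\[
2 \;\geq\; \beta + \Delta\,\beta^{-(k-1)}.
\]
The intended choice is $\beta := 2 - \tfrac{2}{k} = \tfrac{2(k-1)}{k}$, for which $\beta^{k-1} = 2^{k-1}\bigl(\tfrac{k-1}{k}\bigr)^{k-1}$, so the inequality becomes $\tfrac{2}{k} \geq \Delta\cdot \bigl(\tfrac{k}{2(k-1)}\bigr)^{k-1}$, i.e.\ $\Delta \leq \tfrac{2^k}{k}\bigl(\tfrac{k-1}{k}\bigr)^{k-1}$, which is exactly the hypothesis of the theorem.

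Thus \cref{Key} holds with $c=2$ and $\beta = 2 - \tfrac{2}{k}$, and \cref{General} gives $P(G,\mathcal{B},L) \geq \beta^n = (2-\tfrac{2}{k})^n$ satisfying truth assignments (in particular at least one, since $\beta\geq 1$ for $k\geq 2$). There is no real obstacle here; the only step requiring any care is the algebraic verification that the stated $\Delta$ is exactly the threshold for the choice $\beta = 2-\tfrac{2}{k}$ (which minimises the right-hand side of \cref{Key} up to rounding and is what produces the exponential lower bound with base $2-\tfrac{2}{k}$).
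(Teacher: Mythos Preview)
Your proof is correct and follows essentially the same approach as the paper: the same hypergraph encoding, the same observation that each $\mathcal{B}_{e_j}$ is a singleton (hence determined by $\emptyset$, giving weight $k-1$), and the same choice $\beta = 2 - \tfrac{2}{k}$ with $c=2$. The only difference is that you spell out the algebra verifying \cref{Key} in more detail than the paper does.
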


\begin{proof}
 Let $G$ be the hypergraph with $V(G)=\{v_1,\dots,v_n\}$ and $E(G)=\{e_1,\dots,e_m\}$, where edge $e_i$ consists of those variables in clause $c_i$. So $G$ is $k$-uniform. Let $L(v_i)=\{0,1\}$ for each vertex $v_i$. Let $\mathcal{B}_{e_i}$ be the set of $L$-colourings of $e_i$ such that $c_i$ is not satisfied. Satisfying truth assignments for $\psi$ correspond to $\mathcal{B}$-good $L$-colourings of $G$. Each pair $(v,e)$ 
 has weight $k-1$. Thus $E_{k-1}(v)\leq\Delta$ and $E_i(v) = 0$ for all $i\neq k-1$. Then \cref{Key} holds with $\beta := 2-\frac{2}{k}$ and $c:=2$. The result follows from \cref{General}.
\end{proof}

Note that \citet{GST16} proved that if each variable is in at most $(1-o(1))\frac{2^{k+1}}{ke}$ clauses, then there exists a satisfying truth assignment, and that this bound is best possible up to the $o(1)$ term; see \citet{Harris21} for further improvements. These results improve upon the bound in \cref{kSAT} by a factor of 2. However, \cref{kSAT} may still be of interest since it gives exponentially many satisfying assignments and is an immediate corollary of our general framework. 

See \citep{CW18} for bounds on the number of satisfying truth assignments in random {$k$}-{SAT} formulas.

\section{Reflection}
\label{Reflections}

We now reflect on \cref{General}, which provides a general framework
for colouring hypergraphs of bounded degree.

First we discuss minimising the number of colours in \cref{General}.
To do so, one
needs to minimise the right hand side of \cref{Key}, which is a
Laurent series $Q(\beta)$ with nonnegative integer coefficients.  We
assume that at least one edge has positive weight, since otherwise
$Q(\beta)$ is linear.  We also assume that the coefficients in
$Q(\beta)$ grow slowly enough that it and its first two derivatives
converge for all $\beta>R$ for some real number $R$.  For example,
when the weight of edges is bounded (which is true in every example in
this paper outside of \cref{ss:nonrep}), we are optimising a Laurent
polynomial, and may take $R=0$.  Now, $Q''(\beta)>0$ for all
$\beta>R$, so we expect a unique minimum for $Q(\beta)$ on the
interval $[R,\infty)$, say at $\beta=\beta_0$.  Since $Q'(1)\le0$ (or
$R>1$), we must have $\beta_0\ge 1$. Even using a value of
$\beta\ne\beta_0$, one still obtains a non-trivial result from
\cref{General}. In fact, choosing $\beta>\beta_0$ may be desirable
if one wants to find conditions under which there are more
colourings than are guaranteed by taking $\beta=\beta_0$.

Compared with the Lov\'asz Local Lemma, \cref{General} has the advantage of directly proving the existence of exponentially many colourings, and often gives slightly better bounds. The proof of \cref{General} is elementary, and as discussed above, \cref{Key} is often easier to optimise than the General Lov\'asz Local Lemma. 

\cref{General} should also be compared with entropy compression, which is a method that arose from the algorithmic proof of the Lov\'asz Local Lemma due to \citet{MoserTardos}. See \citep{BD17,DJKW16,DFMS20,EsperetParreau,GMP20} for examples of the use of entropy compression in the context of graph colouring. We expect that the results in \cref{Examples} can be proved using entropy compression. For example, see \citep[Theorem~12]{GMP14} for a generic graph colouring lemma in a similar spirit to our \cref{General} that is proved using entropy compression. However, we consider the proof of \cref{General} and the proofs of results that apply \cref{General} to be simpler than their entropy compression counterparts, which require non-trivial analytic techniques from enumerative combinatorics. On the other hand, entropy compression has the advantage that it provides an explicit algorithm to compute the desired colouring, often with polynomial expected time complexity. 

It is also likely that our results in \cref{Examples} can be proved using the Local Cut Lemma~\citep{Bernshteyn17} or via cluster expansion \citep{BFPS11}. The advantage of \cref{General} is the simplicity and elementary nature of its proof. See \citep{FLP20,APS21} for results connecting the Lov\'asz Local Lemma, entropy compression, and cluster expansion. 
 
Finally, we mention a technical advantage of the Lov\'asz Local Lemma and of entropy compression. In the setting of hypergraph colouring, the Lov\'asz Local Lemma and entropy compression need only bound the number of edges that intersect a given edge, whereas \cref{General} requires a bound on the number of edges that contain a given vertex (because the proof is by induction on the number of vertices).

\subsection*{Acknowledgements} Thanks to Danila Cherkashin, Ewan Davies, Louis Esperet, David Harris,  Gwena\'el Joret, Ross Kang, Matthieu Rosenfeld and Lutz Warnke  for helpful feedback on an earlier version of this paper. 

	
{\fontsize{10.5pt}{11.3pt}
\selectfont
\let\oldthebibliography=\thebibliography
\let\endoldthebibliography=\endthebibliography
\renewenvironment{thebibliography}[1]{%
 \begin{oldthebibliography}{#1}%
  \setlength{\parskip}{0ex}%
  \setlength{\itemsep}{0ex}%
 }{\end{oldthebibliography}}
\def\soft#1{\leavevmode\setbox0=\hbox{h}\dimen7=\ht0\advance \dimen7
	by-1ex\relax\if t#1\relax\rlap{\raise.6\dimen7
		\hbox{\kern.3ex\char'47}}#1\relax\else\if T#1\relax
	\rlap{\raise.5\dimen7\hbox{\kern1.3ex\char'47}}#1\relax \else\if
	d#1\relax\rlap{\raise.5\dimen7\hbox{\kern.9ex \char'47}}#1\relax\else\if
	D#1\relax\rlap{\raise.5\dimen7 \hbox{\kern1.4ex\char'47}}#1\relax\else\if
	l#1\relax \rlap{\raise.5\dimen7\hbox{\kern.4ex\char'47}}#1\relax \else\if
	L#1\relax\rlap{\raise.5\dimen7\hbox{\kern.7ex
			\char'47}}#1\relax\else\message{accent \string\soft \space #1 not
		defined!}#1\relax\fi\fi\fi\fi\fi\fi}

}
\end{document}